\begin{document}
\newcommand {\emptycomment}[1]{} 

\baselineskip=14pt
\newcommand{\nc}{\newcommand}
\newcommand{\delete}[1]{}
\nc{\mfootnote}[1]{\footnote{#1}} 
\nc{\todo}[1]{\tred{To do:} #1}

\nc{\mlabel}[1]{\label{#1}}  
\nc{\mcite}[1]{\cite{#1}}  
\nc{\mref}[1]{\ref{#1}}  
\nc{\mbibitem}[1]{\bibitem{#1}} 

\newtheorem{thm}{Theorem}[section]
\newtheorem{lem}[thm]{Lemma}
\newtheorem{cor}[thm]{Corollary}
\newtheorem{pro}[thm]{Proposition}
\newtheorem{ex}[thm]{Example}
\newtheorem{rmk}[thm]{Remark}
\newtheorem{defi}[thm]{Definition}
\newtheorem{pdef}[thm]{Proposition-Definition}
\newtheorem{condition}[thm]{Condition}
\newtheorem{ques}[thm]{Question}

\renewcommand{\labelenumi}{{\rm(\alph{enumi})}}
\renewcommand{\theenumi}{\alph{enumi}}

\nc{\tred}[1]{\textcolor{red}{#1}}
\nc{\tblue}[1]{\textcolor{blue}{#1}}
\nc{\tgreen}[1]{\textcolor{green}{#1}}
\nc{\tpurple}[1]{\textcolor{purple}{#1}}
\nc{\btred}[1]{\textcolor{red}{\bf #1}}
\nc{\btblue}[1]{\textcolor{blue}{\bf #1}}
\nc{\btgreen}[1]{\textcolor{green}{\bf #1}}
\nc{\btpurple}[1]{\textcolor{purple}{\bf #1}}


\nc{\twovec}[2]{\left(\begin{array}{c} #1 \\ #2\end{array} \right )}
\nc{\threevec}[3]{\left(\begin{array}{c} #1 \\ #2 \\ #3 \end{array}\right )}
\nc{\twomatrix}[4]{\left(\begin{array}{cc} #1 & #2\\ #3 & #4 \end{array} \right)}
\nc{\threematrix}[9]{{\left(\begin{matrix} #1 & #2 & #3\\ #4 & #5 & #6 \\ #7 & #8 & #9 \end{matrix} \right)}}
\nc{\twodet}[4]{\left|\begin{array}{cc} #1 & #2\\ #3 & #4 \end{array} \right|}

\nc{\rk}{\mathrm{r}}
\newcommand{\g}{\mathfrak g}
\newcommand{\h}{\mathfrak h}
\newcommand{\pf}{\noindent{$Proof$.}\ }
\newcommand{\frkg}{\mathfrak g}
\newcommand{\frkh}{\mathfrak h}
\newcommand{\Id}{\rm{Id}}
\newcommand{\gl}{\mathfrak {gl}}
\newcommand{\ad}{\mathrm{ad}}
\newcommand{\add}{\frka\frkd}
\newcommand{\frka}{\mathfrak a}
\newcommand{\frkb}{\mathfrak b}
\newcommand{\frkc}{\mathfrak c}
\newcommand{\frkd}{\mathfrak d}
\newcommand {\comment}[1]{{\marginpar{*}\scriptsize\textbf{Comments:} #1}}

\nc{\gensp}{V} 
\nc{\relsp}{\Lambda} 
\nc{\leafsp}{X}    
\nc{\treesp}{\overline{\calt}} 

\nc{\vin}{{\mathrm Vin}}    
\nc{\lin}{{\mathrm Lin}}    

\nc{\gop}{{\,\omega\,}}     
\nc{\gopb}{{\,\nu\,}}
\nc{\svec}[2]{{\tiny\left(\begin{matrix}#1\\
#2\end{matrix}\right)\,}}  
\nc{\ssvec}[2]{{\tiny\left(\begin{matrix}#1\\
#2\end{matrix}\right)\,}} 

\nc{\pll}{\beta}
\nc{\plc}{\epsilon}

\nc{\ass}{{\mathit{Ass}}}
\nc{\lie}{{\mathit{Lie}}}
\nc{\comm}{{\mathit{Comm}}}
\nc{\dend}{{\mathit{Dend}}}
\nc{\zinb}{{\mathit{Zinb}}}
\nc{\tdend}{{\mathit{TDend}}}
\nc{\prelie}{{\mathit{preLie}}}
\nc{\postlie}{{\mathit{PostLie}}}
\nc{\quado}{{\mathit{Quad}}}
\nc{\octo}{{\mathit{Octo}}}
\nc{\ldend}{{\mathit{ldend}}}
\nc{\lquad}{{\mathit{LQuad}}}

 \nc{\adec}{\check{;}} \nc{\aop}{\alpha}
\nc{\dftimes}{\widetilde{\otimes}} \nc{\dfl}{\succ} \nc{\dfr}{\prec}
\nc{\dfc}{\circ} \nc{\dfb}{\bullet} \nc{\dft}{\star}
\nc{\dfcf}{{\mathbf k}} \nc{\apr}{\ast} \nc{\spr}{\cdot}
\nc{\twopr}{\circ} \nc{\tspr}{\star} \nc{\sempr}{\ast}
\nc{\disp}[1]{\displaystyle{#1}}
\nc{\bin}[2]{ (_{\stackrel{\scs{#1}}{\scs{#2}}})}  
\nc{\binc}[2]{ \left (\!\! \begin{array}{c} \scs{#1}\\
    \scs{#2} \end{array}\!\! \right )}  
\nc{\bincc}[2]{  \left ( {\scs{#1} \atop
    \vspace{-.5cm}\scs{#2}} \right )}  
\nc{\sarray}[2]{\begin{array}{c}#1 \vspace{.1cm}\\ \hline
    \vspace{-.35cm} \\ #2 \end{array}}
\nc{\bs}{\bar{S}} \nc{\dcup}{\stackrel{\bullet}{\cup}}
\nc{\dbigcup}{\stackrel{\bullet}{\bigcup}} \nc{\etree}{\big |}
\nc{\la}{\longrightarrow} \nc{\fe}{\'{e}} \nc{\rar}{\rightarrow}
\nc{\dar}{\downarrow} \nc{\dap}[1]{\downarrow
\rlap{$\scriptstyle{#1}$}} \nc{\uap}[1]{\uparrow
\rlap{$\scriptstyle{#1}$}} \nc{\defeq}{\stackrel{\rm def}{=}}
\nc{\dis}[1]{\displaystyle{#1}} \nc{\dotcup}{\,
\displaystyle{\bigcup^\bullet}\ } \nc{\sdotcup}{\tiny{
\displaystyle{\bigcup^\bullet}\ }} \nc{\hcm}{\ \hat{,}\ }
\nc{\hcirc}{\hat{\circ}} \nc{\hts}{\hat{\shpr}}
\nc{\lts}{\stackrel{\leftarrow}{\shpr}}
\nc{\rts}{\stackrel{\rightarrow}{\shpr}} \nc{\lleft}{[}
\nc{\lright}{]} \nc{\uni}[1]{\tilde{#1}} \nc{\wor}[1]{\check{#1}}
\nc{\free}[1]{\bar{#1}} \nc{\den}[1]{\check{#1}} \nc{\lrpa}{\wr}
\nc{\curlyl}{\left \{ \begin{array}{c} {} \\ {} \end{array}
    \right .  \!\!\!\!\!\!\!}
\nc{\curlyr}{ \!\!\!\!\!\!\!
    \left . \begin{array}{c} {} \\ {} \end{array}
    \right \} }
\nc{\leaf}{\ell}       
\nc{\longmid}{\left | \begin{array}{c} {} \\ {} \end{array}
    \right . \!\!\!\!\!\!\!}
\nc{\ot}{\otimes} \nc{\sot}{{\scriptstyle{\ot}}}
\nc{\otm}{\overline{\ot}}
\nc{\ora}[1]{\stackrel{#1}{\rar}}
\nc{\ola}[1]{\stackrel{#1}{\la}}
\nc{\pltree}{\calt^\pl}
\nc{\epltree}{\calt^{\pl,\NC}}
\nc{\rbpltree}{\calt^r}
\nc{\scs}[1]{\scriptstyle{#1}} \nc{\mrm}[1]{{\rm #1}}
\nc{\dirlim}{\displaystyle{\lim_{\longrightarrow}}\,}
\nc{\invlim}{\displaystyle{\lim_{\longleftarrow}}\,}
\nc{\mvp}{\vspace{0.5cm}} \nc{\svp}{\vspace{2cm}}
\nc{\vp}{\vspace{8cm}} \nc{\proofbegin}{\noindent{\bf Proof: }}
\nc{\proofend}{$\blacksquare$ \vspace{0.5cm}}
\nc{\freerbpl}{{F^{\mathrm RBPL}}}
\nc{\sha}{{\mbox{\cyr X}}}  
\nc{\ncsha}{{\mbox{\cyr X}^{\mathrm NC}}} \nc{\ncshao}{{\mbox{\cyr
X}^{\mathrm NC,\,0}}}
\nc{\shpr}{\diamond}    
\nc{\shprm}{\overline{\diamond}}    
\nc{\shpro}{\diamond^0}    
\nc{\shprr}{\diamond^r}     
\nc{\shpra}{\overline{\diamond}^r}
\nc{\shpru}{\check{\diamond}} \nc{\catpr}{\diamond_l}
\nc{\rcatpr}{\diamond_r} \nc{\lapr}{\diamond_a}
\nc{\sqcupm}{\ot}
\nc{\lepr}{\diamond_e} \nc{\vep}{\varepsilon} \nc{\labs}{\mid\!}
\nc{\rabs}{\!\mid} \nc{\hsha}{\widehat{\sha}}
\nc{\lsha}{\stackrel{\leftarrow}{\sha}}
\nc{\rsha}{\stackrel{\rightarrow}{\sha}} \nc{\lc}{\lfloor}
\nc{\rc}{\rfloor}
\nc{\tpr}{\sqcup}
\nc{\nctpr}{\vee}
\nc{\plpr}{\star}
\nc{\rbplpr}{\bar{\plpr}}
\nc{\sqmon}[1]{\langle #1\rangle}
\nc{\forest}{\calf}
\nc{\altx}{\Lambda_X} \nc{\vecT}{\vec{T}} \nc{\onetree}{\bullet}
\nc{\Ao}{\check{A}}
\nc{\seta}{\underline{\Ao}}
\nc{\deltaa}{\overline{\delta}}
\nc{\trho}{\tilde{\rho}}

\nc{\rpr}{\circ}
\nc{\dpr}{{\tiny\diamond}}
\nc{\rprpm}{{\rpr}}

\nc{\mmbox}[1]{\mbox{\ #1\ }} \nc{\ann}{\mrm{ann}}
\nc{\Aut}{\mrm{Aut}} \nc{\can}{\mrm{can}}
\nc{\twoalg}{{two-sided algebra}\xspace}
\nc{\colim}{\mrm{colim}}
\nc{\Cont}{\mrm{Cont}} \nc{\rchar}{\mrm{char}}
\nc{\cok}{\mrm{coker}} \nc{\dtf}{{R-{\rm tf}}} \nc{\dtor}{{R-{\rm
tor}}}
\renewcommand{\det}{\mrm{det}}
\nc{\depth}{{\mrm d}}
\nc{\Div}{{\mrm Div}} \nc{\End}{\mrm{End}} \nc{\Ext}{\mrm{Ext}}
\nc{\Fil}{\mrm{Fil}} \nc{\Frob}{\mrm{Frob}} \nc{\Gal}{\mrm{Gal}}
\nc{\GL}{\mrm{GL}} \nc{\Hom}{\mrm{Hom}} \nc{\hsr}{\mrm{H}}
\nc{\hpol}{\mrm{HP}} \nc{\id}{\mrm{id}} \nc{\im}{\mrm{im}}
\nc{\incl}{\mrm{incl}} \nc{\length}{\mrm{length}}
\nc{\LR}{\mrm{LR}} \nc{\mchar}{\rm char} \nc{\NC}{\mrm{NC}}
\nc{\mpart}{\mrm{part}} \nc{\pl}{\mrm{PL}}
\nc{\ql}{{\QQ_\ell}} \nc{\qp}{{\QQ_p}}
\nc{\rank}{\mrm{rank}} \nc{\rba}{\rm{RBA }} \nc{\rbas}{\rm{RBAs }}
\nc{\rbpl}{\mrm{RBPL}}
\nc{\rbw}{\rm{RBW }} \nc{\rbws}{\rm{RBWs }} \nc{\rcot}{\mrm{cot}}
\nc{\rest}{\rm{controlled}\xspace}
\nc{\rdef}{\mrm{def}} \nc{\rdiv}{{\rm div}} \nc{\rtf}{{\rm tf}}
\nc{\rtor}{{\rm tor}} \nc{\res}{\mrm{res}} \nc{\SL}{\mrm{SL}}
\nc{\Spec}{\mrm{Spec}} \nc{\tor}{\mrm{tor}} \nc{\Tr}{\mrm{Tr}}
\nc{\mtr}{\mrm{sk}}

\nc{\ab}{\mathbf{Ab}} \nc{\Alg}{\mathbf{Alg}}
\nc{\Algo}{\mathbf{Alg}^0} \nc{\Bax}{\mathbf{Bax}}
\nc{\Baxo}{\mathbf{Bax}^0} \nc{\RB}{\mathbf{RB}}
\nc{\RBo}{\mathbf{RB}^0} \nc{\BRB}{\mathbf{RB}}
\nc{\Dend}{\mathbf{DD}} \nc{\bfk}{{\bf k}} \nc{\bfone}{{\bf 1}}
\nc{\base}[1]{{a_{#1}}} \nc{\detail}{\marginpar{\bf More detail}
    \noindent{\bf Need more detail!}
    \svp}
\nc{\Diff}{\mathbf{Diff}} \nc{\gap}{\marginpar{\bf
Incomplete}\noindent{\bf Incomplete!!}
    \svp}
\nc{\FMod}{\mathbf{FMod}} \nc{\mset}{\mathbf{MSet}}
\nc{\rb}{\mathrm{RB}} \nc{\Int}{\mathbf{Int}}
\nc{\Mon}{\mathbf{Mon}}
\nc{\remarks}{\noindent{\bf Remarks: }}
\nc{\OS}{\mathbf{OS}} 
\nc{\Rep}{\mathbf{Rep}}
\nc{\Rings}{\mathbf{Rings}} \nc{\Sets}{\mathbf{Sets}}
\nc{\DT}{\mathbf{DT}}

\nc{\BA}{{\mathbb A}} \nc{\CC}{{\mathbb C}} \nc{\DD}{{\mathbb D}}
\nc{\EE}{{\mathbb E}} \nc{\FF}{{\mathbb F}} \nc{\GG}{{\mathbb G}}
\nc{\HH}{{\mathbb H}} \nc{\LL}{{\mathbb L}} \nc{\NN}{{\mathbb N}}
\nc{\QQ}{{\mathbb Q}} \nc{\RR}{{\mathbb R}} \nc{\BS}{{\mathbb{S}}} \nc{\TT}{{\mathbb T}}
\nc{\VV}{{\mathbb V}} \nc{\ZZ}{{\mathbb Z}}


\nc{\calao}{{\mathcal A}} \nc{\cala}{{\mathcal A}}
\nc{\calc}{{\mathcal C}} \nc{\cald}{{\mathcal D}}
\nc{\cale}{{\mathcal E}} \nc{\calf}{{\mathcal F}}
\nc{\calfr}{{{\mathcal F}^{\,r}}} \nc{\calfo}{{\mathcal F}^0}
\nc{\calfro}{{\mathcal F}^{\,r,0}} \nc{\oF}{\overline{F}}
\nc{\calg}{{\mathcal G}} \nc{\calh}{{\mathcal H}}
\nc{\cali}{{\mathcal I}} \nc{\calj}{{\mathcal J}}
\nc{\call}{{\mathcal L}} \nc{\calm}{{\mathcal M}}
\nc{\caln}{{\mathcal N}} \nc{\calo}{{\mathcal O}}
\nc{\calp}{{\mathcal P}} \nc{\calq}{{\mathcal Q}} \nc{\calr}{{\mathcal R}}
\nc{\calt}{{\mathcal T}} \nc{\caltr}{{\mathcal T}^{\,r}}
\nc{\calu}{{\mathcal U}} \nc{\calv}{{\mathcal V}}
\nc{\calw}{{\mathcal W}} \nc{\calx}{{\mathcal X}}
\nc{\CA}{\mathcal{A}}

\nc{\fraka}{{\mathfrak a}} \nc{\frakB}{{\mathfrak B}}
\nc{\frakb}{{\mathfrak b}} \nc{\frakd}{{\mathfrak d}}
\nc{\oD}{\overline{D}}
\nc{\frakF}{{\mathfrak F}} \nc{\frakg}{{\mathfrak g}}
\nc{\frakm}{{\mathfrak m}} \nc{\frakM}{{\mathfrak M}}
\nc{\frakMo}{{\mathfrak M}^0} \nc{\frakp}{{\mathfrak p}}
\nc{\frakS}{{\mathfrak S}} \nc{\frakSo}{{\mathfrak S}^0}
\nc{\fraks}{{\mathfrak s}} \nc{\os}{\overline{\fraks}}
\nc{\frakT}{{\mathfrak T}}
\nc{\oT}{\overline{T}}
\nc{\frakX}{{\mathfrak X}} \nc{\frakXo}{{\mathfrak X}^0}
\nc{\frakx}{{\mathbf x}}
\nc{\frakTx}{\frakT}      
\nc{\frakTa}{\frakT^a}        
\nc{\frakTxo}{\frakTx^0}   
\nc{\caltao}{\calt^{a,0}}   
\nc{\ox}{\overline{\frakx}} \nc{\fraky}{{\mathfrak y}}
\nc{\frakz}{{\mathfrak z}} \nc{\oX}{\overline{X}}

\font\cyr=wncyr10

\nc{\redtext}[1]{\textcolor{red}{#1}}

\title{\bf On the tensor product of two oriented quantum algebras}
\author{Tianshui Ma\textsuperscript{*}}
\address{School of Mathematics and Information Science, Henan Normal University, Xinxiang 453007, China}
\email{matianshui@yahoo.com}

\author{Haiyan Yang}
\address{School of Mathematics and Information Science, Henan Normal University, Xinxiang 453007, China}
\email{yhy3023551288@163.com}

\author{Tao Yang}
\address{Department of Mathematics, Nanjing Agricultural University, Nanjing 210095, China}
\email{yangtao19841113@gmail.com}

\thanks{\textsuperscript{*}Corresponding author}

\begin{abstract}                                                                        
 In this paper, we give the oriented quantum algebra (abbr. OQA) structures on the tensor product of two different OQAs by using Chen's weak $\mathfrak{R}$-matrix in [J. Algebra 204(1998):504-531]. As a special case, the OQA structures on the tensor product of an OQA with itself are provided, which are different from Radford's results in [J. Knot Theory Ramifications 16(2007):929-957].
\end{abstract}

\subjclass[2010]{16T05}

\keywords{Quantum Yang-Baxter equation; Quasitriangular Hopf algebra; Oriented quantum algebra.}

 \maketitle

\tableofcontents

\numberwithin{equation}{section}

\tableofcontents
\numberwithin{equation}{section}
\allowdisplaybreaks

\section{Introduction}
 It is well known that the quantum Yang-Baxter equation has wide applications in mathematics and physics and an $\mathfrak{R}$-matrix gives rise to a solution for the quantum Yang-Baxter equation. Algebraic structures related to the quantum Yang-Baxter equation have been extensively studied in the literature recently, see Refs\cite{Ch,MLW,MLL1,MZ,MW,Ra1}. The notion of oriented quantum algebra (abbr. OQA) (this structure can provide the solution for the quantum Yang-Baxter equation) was introduced and motivated from a topological point of view. OQAs account for most known regular isotopy invariants of oriented links, thus OQAs are important for that reason. Quasitriangular Hopf algebras are the important examples of OQAs.

 So it is important to construct OQAs for obtaining new invariants. This is the main motivation to write this paper.

 In \cite{Ra2}, Radford gave the OQA structure on the tensor product of an OQA with itself, via an algebra isomorphism of the quantum double $D(A)$ with $A\otimes A$ under certain condition. In \cite{MW}, Ma and Wang got a generalization of Radford's results just by observing and analyzing his construction. Let $H$ and $H'$ be two quasitriangular Hopf algebras, Chen got the quasitriangular structures of bicrossed coproducts $H\bowtie^ r H'$ by using a so-called weak $\mathfrak{R}$-matrix of $(H, H')$( \cite{Ch}). In this paper, we follow Chen's idea about weak $\mathfrak{R}$-matrix and investigate a new approach to obtain some OQA structures.

 In this paper, we present a class of OQA structures on the tensor product of two OQAs, which is a generalization of the Chen's Theorem (see \cite[Theorem 2.2]{Ch}) and is different from the Radford's Theorem (see \cite[Theorem 4.1]{Ra2}). In Section 3, the OQA structure on the tensor product of two OQAs is derived under a suitable condition (see Theorem \ref{thm:3.6}). Then we can obtain the OQA structure on the tensor product of an OQA with itself (see Theorem \ref{thm:3.7}), which is different from \cite[Theorem 4.1]{Ra2} (see Remarks \ref{rmk:3.8} and \ref{rmk:4.4}). And also, our result generalizes the OQA construction implicit in Chen's theorem \cite[Theorem 2.2]{Ch} (see Theorem \ref{thm:3.10}).  Finally, in Section 4, several nontrivial concrete examples are given.

\section{Preliminaries}

 Throughout the paper, we follow the definitions and terminologies in \cite{Ch,Ra1,Ra2} and all algebraic systems are over a field $K$. Let $C$ be a coalgebra. Then we use the Heyneman-Sweedler's notation for the comultiplication $\Delta(c)=c_{1}\otimes c_{2}$ for any $c\in C$. Given a $K$-space $M$, we write $id_M$ for the identity map on $M$.  Let $(H, \mu, \eta)$ be an algebra. The opposite algebra
 $(H, \mu^{op}, \eta)$, denoted by $H^{op}$, has multiplication described in terms of that for $H$ by $\mu^{op}(a\otimes b)=ba$ for all $a, b\in H$.

 Let $r=\sum r_{i}\otimes r^{i}\in A\otimes A$. We represent sums of tensors formally as a single tensor. For example we write $r=r_{i}\otimes r^{i}=r_{j}\otimes r^{j}=r_{k}\otimes r^{k}=\cdots$. When $r$ is invertible, we will frequently write $r^{-1}=R=R_{l}\otimes R^{l}=R_{m}\otimes R^{m}=R_{n}\otimes R^{n}=\cdots$.  We define $r_{12}=r_{i}\otimes r^{i}\otimes 1, r_{13}=r_{i}\otimes 1\otimes r^{i}$ and $r_{23}=1\otimes r_{i}\otimes r^{i}$.

\smallskip

 Next we recall \cite{Ch,Ra2,MLW} some basic definitions and results which will be used later.

\begin{defi}\label{de:1.1} {\rm Suppose that $r=r_{i}\otimes r^{i}\in H\otimes H$ is invertible and
 $D,U$ are commuting algebra automorphisms of $H$. Then $(H, r, D, U)$ is an {\bf oriented quantum algebra} (abbr. OQA) if the following conditions hold:
 \begin{eqnarray}
 &&(D\otimes I_H)(r^{-1}) \hbox{~and~} (I_H\otimes U)(r) \hbox{~are inverses in~} H\otimes H^{op}; \label{eq:2.1}\\
 &&(D\otimes D)(r)=r=(U\otimes U)(r); \label{eq:2.2}\\
 &&r_{12}r_{13}r_{23}=r_{23}r_{13}r_{12}. \label{eq:2.3}
 \end{eqnarray}

 Formally writing $r=r_{i}\otimes r^{i}=r_{j}\otimes r^{j}=r_{k}\otimes r^{k}$ and $r^{-1}=R=R_{l}\otimes R^{l}$, we can
 reformulate the preceding axioms:
 \begin{eqnarray}
 &&D(R_{l})r_{i}\otimes U(r^{i})R^{l}=1\otimes 1=r_{i}D(R_{l})\otimes R^{l}U(r^{i});\label{eq:2.4}\\
 &&D(r_{i})\otimes D(r^{i})=r=U(r_{i})\otimes U(r^{i});\label{eq:2.5}\\
 &&r_{i}r_{j}\otimes r^{i}r_{k}\otimes r^{j}r^{k}=r_{j}r_{i}\otimes r_{k}r^{i}\otimes r^{k}r^{j}.\label{eq:2.6}
 \end{eqnarray}

 The condition Eq.(\ref{eq:2.3}) has the following equivalent form:
 \begin{equation}\label{eq:2.7}
 r_{j}\otimes r_{i}\otimes r^{j} r^{i}=R_{l}r_{j} r_{i}\otimes R^{l}r_{k}r^{i}\otimes r^{k}r^{j}.
 \end{equation}}
\end{defi}

\begin{rmk}\label{rmk:2.2}
 Suppose that $(H, r, D, U)$ is an OQA. Then $U\otimes D$ is an algebra automorphism of $H\otimes H^{op}$. Since $D(R_{l})\otimes R^{l}$ and $r_{i}\otimes U(r^{i})$ are inverses in $H\otimes H^{op}$, it follows that $(H,r,U,D)$ is an OQA. Suppose that $(H', r', D', U')$ is also an OQA and let $\tau_{H, H'}: H\rightarrow H'$ be defined by $\tau_{H,H'}(h\otimes h')=h'\otimes h$ for all $h\in H, h'\in H'$. Then $(H\otimes H', r'', D\otimes D', U\otimes U')$ is an OQA, called the tensor product of $(H, r, D, U)$ and $(H', r', D', U')$, where write
 $r''=(id_H\otimes \tau_{H,H'}\otimes id_{H'})(r\otimes r')$.
\end{rmk}

\begin{thm}\label{thm:2.3} Suppose that $(H, r, D, U)$ is an OQA and write $r=r_{i}\otimes
 r^{i}=r_{j}\otimes r^{j}$,  $r^{-1}=R=R_{l}\otimes R^{l}=R_{m}\otimes R^{m}$. Then $(H\otimes H,  \tilde{\alpha}, \tilde{D}, \tilde{U})$ is an OQA,
 where
 \begin{equation}\label{eq:2.8}
 \tilde{\alpha}=R^{l}r_{i}\otimes R^m r_{j}\otimes r^i r^j \otimes R_{l}R_{m},
 \end{equation}
 $\tilde{D}=D\otimes D$ and $\tilde{U}=U\otimes U$.
\end{thm}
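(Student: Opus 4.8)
The plan is to work in $H^{\otimes 4}$, identifying the two copies of $H\otimes H$ inside $(H\otimes H)\otimes(H\otimes H)$ with the slot pairs $\{1,2\}$ and $\{3,4\}$, and then to verify the three reformulated axioms \eqref{eq:2.4}--\eqref{eq:2.6} for the quadruple $(H\otimes H,\tilde\alpha,\tilde D,\tilde U)$. The organizing observation is that $\tilde\alpha$ factors into elementary copies of $r$ and $R=r^{-1}$: writing $r_{ab}$ (resp. $R_{ab}$) for the image of $r$ (resp. $R$) placed in slots $a,b$ of $H^{\otimes 4}$, a slot-by-slot computation gives
\begin{equation*}
\tilde\alpha=R_{41}R_{42}\,r_{13}\,r_{23}.
\end{equation*}
This factorization is the device that reduces each axiom for $\tilde\alpha$ to the corresponding axiom for $r$, together with the ``far commutativity'' that $X_{ab}$ and $Y_{cd}$ commute in $H^{\otimes 4}$ (or $H^{\otimes 6}$) whenever $\{a,b\}\cap\{c,d\}=\emptyset$.

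First I would dispatch the invariance axiom \eqref{eq:2.5}. Since $D,U$ are commuting algebra automorphisms with $(D\otimes D)(r)=r=(U\otimes U)(r)$, applying $D\otimes D$ to $r\cdot r^{-1}=1\otimes 1$ forces $(D\otimes D)(R)=R$, and similarly for $U$; hence $\tilde D=D\otimes D$ and $\tilde U=U\otimes U$ fix every elementary factor $r_{ab},R_{ab}$ occurring in $\tilde\alpha$, so $(\tilde D\otimes\tilde D)(\tilde\alpha)=\tilde\alpha=(\tilde U\otimes\tilde U)(\tilde\alpha)$, and that $\tilde D,\tilde U$ are commuting automorphisms of $H\otimes H$ is immediate. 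For the inverse axiom \eqref{eq:2.4}, I would read off from the factorization (using $r_{ab}^{-1}=R_{ab}$ and $R_{ab}^{-1}=r_{ab}$, which hold because $1\otimes 1$ is flip-symmetric) that
\begin{equation*}
\tilde\alpha^{-1}=R_{23}R_{13}\,r_{42}\,r_{41},
\end{equation*}
and then check that $(\tilde D\otimes\mathrm{id})(\tilde\alpha^{-1})$ and $(\mathrm{id}\otimes\tilde U)(\tilde\alpha)$ are mutually inverse in $(H\otimes H)\otimes(H\otimes H)^{op}$. Here the book-keeping of the opposite multiplication on the last two slots is routine, and the interaction of the $r$'s with the $R$'s collapses precisely by the mixed relation \eqref{eq:2.4} for $r$ itself.

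The main obstacle is the Yang--Baxter axiom \eqref{eq:2.6}, which for $\tilde\alpha$ lives in $(H\otimes H)^{\otimes 3}\cong H^{\otimes 6}$ with the three copies occupying the slot pairs $\{1,2\},\{3,4\},\{5,6\}$. Substituting the factorization in each position, the two sides of
\begin{equation*}
\tilde\alpha_{12}\tilde\alpha_{13}\tilde\alpha_{23}=\tilde\alpha_{23}\tilde\alpha_{13}\tilde\alpha_{12}
\end{equation*}
each expand into a product of twelve elementary factors. My plan is to transport one side onto the other by repeatedly commuting disjoint factors past one another (far commutativity) and applying the hexagon relation \eqref{eq:2.7} and the Yang--Baxter relation \eqref{eq:2.6} to the triples of factors that share a common slot, invoking the mixed relation \eqref{eq:2.4} whenever an $r_{ab}$ must be moved past an $R_{ac}$. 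I expect this rearrangement to be the genuinely laborious part of the argument, since tracking the slot indices through the twelve-fold products is delicate; everything else follows formally from the axioms recalled in Definition \ref{de:1.1}.
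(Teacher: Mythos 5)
Your proposal takes a route the paper itself never takes: Theorem \ref{thm:2.3} is stated without proof (it is recalled from Radford \cite{Ra2}), and the proofs this paper does supply for its own parallel results --- Theorem \ref{thm:3.5} and its specializations, Theorems \ref{thm:3.6} and \ref{thm:3.7} --- are long Sweedler-style component calculations driven by the identities \eqref{eq:3.5}--\eqref{eq:3.8}, \eqref{eq:3.11}, \eqref{eq:3.12} and \eqref{eq:2.7}. Your factorization is correct: in your slot notation $\tilde\alpha=R_{41}R_{42}r_{13}r_{23}$, hence $\tilde\alpha^{-1}=R_{23}R_{13}r_{42}r_{41}$, and the plan it supports does go through. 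For the Yang--Baxter axiom the identity to be proved in $H^{\otimes 6}$ is
\begin{align*}
&(R_{41}R_{42}r_{13}r_{23})(R_{61}R_{62}r_{15}r_{25})(R_{63}R_{64}r_{35}r_{45})\\
&\qquad=(R_{63}R_{64}r_{35}r_{45})(R_{61}R_{62}r_{15}r_{25})(R_{41}R_{42}r_{13}r_{23}),
\end{align*}
and I confirm this follows from far commutativity together with the eight triple relations $r_{13}r_{15}r_{35}=r_{35}r_{15}r_{13}$, $r_{13}R_{61}R_{63}=R_{63}R_{61}r_{13}$, $R_{41}r_{15}r_{45}=r_{45}r_{15}R_{41}$, $R_{41}R_{61}R_{64}=R_{64}R_{61}R_{41}$, and their copies with slot $1$ replaced by slot $2$; each is either \eqref{eq:2.3} in permuted slots or obtained from it by multiplying by inverses. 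Dragging $R_{63}$, then $R_{64}$, then $r_{35}$, then $r_{45}$ to the far left consumes each of the eight relations exactly once, and the leftover word is matched by far commutativity alone. Compared with the element-wise method of Theorem \ref{thm:3.5}, your argument is shorter and makes the structural reason for the theorem visible.

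Two repairs are needed. First, in the Yang--Baxter step you propose to invoke ``the mixed relation \eqref{eq:2.4}'' to move $r_{ab}$ past $R_{ac}$; that is the wrong citation. Equation \eqref{eq:2.4} concerns $D$, $U$ and inverses in $H\otimes H^{op}$ and plays no role in the QYBE verification; what moves $r_{ab}$ past $R_{ac}$ are the consequences of \eqref{eq:2.3} listed above. Note also that only the two \emph{outer} factors of \eqref{eq:2.3} can be inverted to produce such relations --- there is no two-sided relation isolating the middle factor --- so the rearrangement must be organized (as above) so that this never becomes necessary. Second, the inverse axiom \eqref{eq:2.1} is less routine than you indicate, for two reasons: (i) in $(H\otimes H)\otimes(H\otimes H)^{op}$ the multiplication in the last two slots is reversed, so the ordered factorizations of $(\tilde D\otimes\mathrm{id})(\tilde\alpha^{-1})$ and $(\mathrm{id}\otimes\tilde U)(\tilde\alpha)$ must be re-derived in that twisted algebra, where any two factors sharing an op-slot appear in the opposite order; (ii) once this is done, the cancelling pairs supported in slots $(1,3)$ and $(2,3)$ use \eqref{eq:2.4} as stated, but the pairs supported in slots $(4,1)$ and $(4,2)$ require $U(R_{l})r_{i}\otimes D(r^{i})R^{l}=1\otimes 1=r_{i}U(R_{l})\otimes R^{l}D(r^{i})$, i.e.\ \eqref{eq:2.4} with the roles of $D$ and $U$ interchanged. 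This is exactly the content of Remark \ref{rmk:2.2} (that $(H,r,U,D)$ is again an OQA), so it is available, but it must be invoked explicitly; \eqref{eq:2.4} alone does not close the computation.
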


\begin{defi}\label{de:2.4} {\rm A {\bf quasitriangular Hopf algebra} is a pair $(H,r)$, where $H$ is a Hopf algebra, $r$ is an
 element in $H\otimes H$ satisfying the following conditions:
 \begin{eqnarray}
 &&(\Delta\otimes id)(r)=r_{j}\otimes r_{i}\otimes r^{j} r^{i};\label{eq:2.9}\\
 &&(id\otimes \Delta)(r)=r_{j} r_{i}\otimes r^{i}\otimes r^{j};\label{eq:2.10}\\
 &&\Delta^{cop}(h)r=r\Delta(h);\label{eq:2.11}\\
 &&\varepsilon(r_{i})r^{i}=r_{i}\varepsilon(r^{i})=1,\label{eq:2.12}
 \end{eqnarray}
 where $r=r_{i}\otimes r^{i}=r_{j}\otimes r^{j}$.}
\end{defi}

\begin{rmk}\label{rmk:2.5}
 In this case, $r$ is called a {\bf universal $\mathfrak{R}$-matrix of $H$} and $r$ is invertible. If  $(H, r)$ is a quasitriangular Hopf algebra, then the antipode $S_H$ is bijective and $(H, r, id_H, S_H^{-2})$ is an OQA.
\end{rmk}

\begin{defi}\label{de:2.6} {\rm Let $H, H'$ be bialgebras. An invertible element $r\in H\otimes H'$ is a {\bf weak $\mathfrak{R}$-matrix of $(H, H')$} if the following conditions are satisfied:
 \begin{eqnarray}
 &&(\Delta \otimes id)(r)= r_{i}\otimes r_{j}\otimes r^{i} r^{j};\label{eq:2.13}\\
 &&(id\otimes \Delta)(r)= r_{i} r_{j}\otimes r^{j} \otimes r^{i},\label{eq:2.14}
 \end{eqnarray}
 where $r=r_{i}\otimes r^{i}=r_{j}\otimes r^{j}$.}
\end{defi}

\begin{rmk}\label{rmk:2.7}
 If $H$ and $H'$ are two Hopf algebras with bijective antipodes and  $r=r_{i}\otimes r^{i}\in H\otimes H'$ is a weak $\mathfrak{R}$-matrix of $(H, H')$, then
 \begin{equation}\label{eq:2.15}
 r^{-1}=S_H(r_{i})\otimes r^{i}=r_{i}\otimes S_{H'}^{-1}(r^{i})
 \end{equation}
 and
 \begin{equation}\label{eq:2.16}
 r=(S_H\otimes S_{H'})(r).
 \end{equation}
\end{rmk}

\begin{thm}\label{thm:2.8}
 Let $H$ and $H'$ be two Hopf algebras and  $r=r_{i}\otimes r^{i}\in H\otimes H'$ a weak $\mathfrak{R}$-matrix of $(H, H')$ ( write $r^{-1}=R=R_{l}\otimes R^{l}$), then $H\bowtie^ r H'$ ($=H\otimes H'$ as an algebra) with the following coproduct $\bar{\Delta}$ and antipode $\bar{S}$ is a Hopf algebra, where
 $$
 \bar{\Delta}(h\otimes h')=h_1\otimes r^{i}h'_1 R^{l}\otimes r_{i}h_2 R_{l}\otimes h'_2
 $$
 and
 $$
 \bar{S}(h\otimes h')=r^{-1}(S_H(h)\otimes S_{H'}(h'))r,
 $$
 $\forall h\in H, h'\in H'$. We call this Hopf algebra {\bf bicrossed coproduct Hopf algebra}.
\end{thm}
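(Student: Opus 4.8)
The plan is to equip $A:=H\otimes H'$ (the tensor product algebra, with unit $1\otimes 1$) with the counit $\bar{\varepsilon}=\varepsilon_H\otimes\varepsilon_{H'}$ and verify in turn that $\bar{\Delta}$ is multiplicative and unital, that $(A,\bar{\Delta},\bar{\varepsilon})$ is a coalgebra, and finally that $\bar{S}$ is an antipode. The observation that organizes everything is that $\bar{\Delta}$ factors as
\[
\bar{\Delta}=(\mathrm{id}_H\otimes\sigma\otimes\mathrm{id}_{H'})\circ(\Delta_H\otimes\Delta_{H'}),
\]
where $\sigma:H\otimes H'\to H'\otimes H$ is given by $\sigma(a\otimes b)=r^{i}bR^{l}\otimes r_{i}aR_{l}$; indeed, applying $\sigma$ to the two middle legs of $h_1\otimes h_2\otimes h'_1\otimes h'_2$ reproduces the stated formula. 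Since $\sigma=\tau\circ c_r$ is the composite of conjugation $c_r(z)=rzr^{-1}$ by the invertible element $r$ (an algebra automorphism of $H\otimes H'$) with the flip $\tau:H\otimes H'\to H'\otimes H$ (an algebra isomorphism of tensor product algebras), $\sigma$ is an algebra isomorphism. Hence $\bar{\Delta}$ is a composite of algebra maps and is automatically multiplicative and unital; note this step uses only the invertibility of $r$, not the weak $\mathfrak{R}$-matrix axioms.

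Before the coassociativity and counit checks I would record two sets of auxiliary identities. First, applying the algebra map $\varepsilon_H\otimes\mathrm{id}$ to Eq.(\ref{eq:2.13}) shows that $e:=(\varepsilon_H\otimes\mathrm{id})(r)$ satisfies $e^{2}=e$, while applying the same map to $rr^{-1}=1\otimes 1$ gives $e\cdot(\varepsilon_H\otimes\mathrm{id})(r^{-1})=1$; together these force $(\varepsilon_H\otimes\mathrm{id})(r)=1_{H'}$, and symmetrically $(\mathrm{id}\otimes\varepsilon_{H'})(r)=1_H$, with the same statements for $R=r^{-1}$. With these the counit axioms $(\bar{\varepsilon}\otimes\mathrm{id})\bar{\Delta}=\mathrm{id}=(\mathrm{id}\otimes\bar{\varepsilon})\bar{\Delta}$ follow by a short computation. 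Second, reading Eq.(\ref{eq:2.13}) and Eq.(\ref{eq:2.14}) in leg notation as $(\Delta_H\otimes\mathrm{id})(r)=r_{13}r_{23}$ and $(\mathrm{id}\otimes\Delta_{H'})(r)=r_{13}r_{12}$ and inverting (using $(xy)^{-1}=y^{-1}x^{-1}$ in the triple tensor algebra), I obtain the companion relations $(\Delta_H\otimes\mathrm{id})(R)=R_{l}\otimes R_{m}\otimes R^{m}R^{l}$ and $(\mathrm{id}\otimes\Delta_{H'})(R)=R_{l}R_{m}\otimes R^{l}\otimes R^{m}$, which are needed to comultiply the $R^{l}$- and $R_{l}$-factors that occur inside $\bar{\Delta}$.

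The heart of the proof is coassociativity, $(\bar{\Delta}\otimes\mathrm{id}_A)\bar{\Delta}=(\mathrm{id}_A\otimes\bar{\Delta})\bar{\Delta}$. I would expand both sides on a generator $h\otimes h'$ into $H\otimes H'\otimes H\otimes H'\otimes H\otimes H'$. On each side the inner $\bar{\Delta}$ introduces a fresh copy of $r$ and $R$, and re-applying $\bar{\Delta}$ to one tensorand forces one to comultiply the $\mathfrak{R}$-matrix legs $r^{i},r_{i},R^{l},R_{l}$; this is exactly where Eqs.(\ref{eq:2.13})--(\ref{eq:2.14}) and their inverted companions enter, converting each such comultiplication into a product of two $\mathfrak{R}$-matrices in the appropriate legs. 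After these substitutions both groupings collapse to the same element, the matching being precisely the ``cocycle'' content of the weak $\mathfrak{R}$-matrix axioms; conceptually $\bar{\Delta}$ is the flip-twist of the tensor coproduct by $r$ placed in the inner two legs, and (\ref{eq:2.13})--(\ref{eq:2.14}) are exactly the conditions making such a twist coassociative. The main obstacle here is the bookkeeping: tracking which of the six tensor slots each $r$- or $R$-index lands in and applying the leg relations in the correct slots.

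Finally, for the antipode I would verify $\sum\bar{S}(x_{(1)})x_{(2)}=\bar{\varepsilon}(x)(1\otimes 1)=\sum x_{(1)}\bar{S}(x_{(2)})$, writing $\bar{\Delta}(x)=x_{(1)}\otimes x_{(2)}$. Writing $\bar{S}=c_{R}\circ(S_H\otimes S_{H'})$ with $c_{R}(z)=RzR^{-1}$ exhibits $\bar{S}$ as an algebra anti-homomorphism (the tensor product $S_H\otimes S_{H'}$ of two anti-homomorphisms is an anti-homomorphism, postcomposed with the automorphism $c_R$), which is consistent with its being an antipode. The two antipode identities then follow from the antipode axioms of $H$ and $H'$ together with the weak $\mathfrak{R}$-matrix relations used to cancel the $r$- and $R$-factors produced by $\bar{\Delta}$ and by $\bar{S}$; this computation, routine in spirit, is the second most delicate after coassociativity. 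Assembling the four verifications shows that $H\bowtie^{r}H'$ is a Hopf algebra.
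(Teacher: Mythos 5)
The paper itself does not prove Theorem \ref{thm:2.8}: it is recalled as background from Chen \cite{Ch}, so your proposal can only be measured against the standard direct verification, which is exactly the route you take. Everything you actually state is correct. The factorization $\bar{\Delta}=(\mathrm{id}_H\otimes\sigma\otimes\mathrm{id}_{H'})\circ(\Delta_H\otimes\Delta_{H'})$ with $\sigma=\tau\circ c_r$ an algebra isomorphism is a clean way to get multiplicativity and unitality in one stroke (and correctly uses only invertibility of $r$); the counit identities $(\varepsilon_H\otimes\mathrm{id})(r)=1_{H'}$, $(\mathrm{id}\otimes\varepsilon_{H'})(r)=1_H$ (and the same for $R$) are correctly forced by idempotence plus invertibility; and the inverted companions $(\Delta_H\otimes\mathrm{id})(R)=R_l\otimes R_m\otimes R^mR^l$ and $(\mathrm{id}\otimes\Delta_{H'})(R)=R_lR_m\otimes R^l\otimes R^m$ are the right auxiliary tools. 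Your only shortfall is that the two central verifications, coassociativity and the antipode axioms, are left as plans rather than computations; for the record, both close exactly as you predict. For coassociativity, expanding and substituting (\ref{eq:2.13}), (\ref{eq:2.14}) and their inverses turns both $(\bar{\Delta}\otimes\mathrm{id})\bar{\Delta}(h\otimes h')$ and $(\mathrm{id}\otimes\bar{\Delta})\bar{\Delta}(h\otimes h')$ into
\[
h_1\otimes r^ir^kh'_1R^lR^n\otimes r_ih_2R_n\otimes r^jh'_2R^m\otimes r_jr_kh_3R_lR_m\otimes h'_3,
\]
up to renaming of summation indices, and with no commutation of $\mathfrak{R}$-matrix legs past the $h$-legs required, so the "bookkeeping" you worry about is genuinely all there is. For the antipode, once the antipode axioms of $H$ and $H'$ dispose of $S_H(h_1)h_2$ and $S_{H'}(h'_1)h'_2$ (resp. $h_1S_H(h_2)$ and $h'_1S_{H'}(h'_2)$), what remains are the four collapsing identities $r_jr_i\otimes S_{H'}(r^i)r^j=1\otimes1$, $R_mR_l\otimes R^mS_{H'}(R^l)=1\otimes1$, $S_H(r_i)r_j\otimes r^ir^j=1\otimes1$ and $R_mS_H(R_l)\otimes R^lR^m=1\otimes1$, each an immediate consequence of (\ref{eq:2.13}) or (\ref{eq:2.14}) (or its inverse) combined with an antipode axiom and the counit identities you derived. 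With these computations written out, your outline becomes a complete and correct proof along the expected lines.
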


\begin{thm}\label{thm:2.9}
 Assume that $(H, p)$ and $(H', p')$ are quasitriangular Hopf algebras, where $p=p_{i}\otimes p^{i}\in H\otimes H,~p'=
 p'_{j}\otimes p'^{j}\in H'\otimes H'$.  Let $r=r_{k}\otimes r^{k}\in H\otimes H'$ be a weak $\mathfrak{R}$-matrix of $(H, H')$  ( write $r^{-1}=R=R_{l}\otimes R^{l}$). Set $[p, p']=r_{k}p_{i}\otimes p'_{j}R^{l}\otimes p^{i}R_{l}\otimes r^{k}p'^{j}$, then $(H\bowtie^r H', [p, p'])$ is a quasitriangular Hopf algebra.
\end{thm}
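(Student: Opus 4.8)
The goal is to verify the four axioms of Definition~\ref{de:2.4} for the pair $(H\bowtie^r H', [p,p'])$, where the comultiplication $\bar\Delta$ is the one furnished by Theorem~\ref{thm:2.8}. The organizing observation behind my plan is that $\bar\Delta$ is a conjugate (a Drinfeld-type twist) of the ordinary tensor-product comultiplication. Setting
$$
W=(1\otimes r^{i})\otimes(r_{i}\otimes 1), \qquad W^{-1}=(1\otimes R^{l})\otimes(R_{l}\otimes 1)
$$
as invertible elements of $(H\bowtie^r H')\otimes(H\bowtie^r H')=H\otimes H'\otimes H\otimes H'$, a one-line componentwise computation shows
$$
\bar\Delta(h\otimes h')=W\bigl(h_{1}\otimes h'_{1}\otimes h_{2}\otimes h'_{2}\bigr)W^{-1},
$$
so that $\bar\Delta=W\,\Delta_{H\otimes H'}(\cdot)\,W^{-1}$, where $\Delta_{H\otimes H'}$ is the tensor-product Hopf algebra structure on $H\otimes H'$. (Invertibility of $W$ with the stated inverse is exactly $rR=1\otimes 1$.)

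With this in hand I would first record that $(H\otimes H',\Delta_{H\otimes H'})$ is quasitriangular with the standard tensor $\mathfrak{R}$-matrix $\mathcal{R}=(p_{i}\otimes p'_{j})\otimes(p^{i}\otimes p'^{j})=(\id_H\otimes\tau_{H,H'}\otimes\id_{H'})(p\otimes p')$, which is immediate from the quasitriangularity of $(H,p)$ and $(H',p')$ together with the fact that $p$ and $p'$ occupy different tensor factors. The key computation is then that the candidate $[p,p']$ is precisely the twist-transform of $\mathcal{R}$: writing $W_{21}=(r_{k}\otimes 1)\otimes(1\otimes r^{k})$ and multiplying slot by slot,
$$
W_{21}\,\mathcal{R}\,W^{-1}=r_{k}p_{i}\otimes p'_{j}R^{l}\otimes p^{i}R_{l}\otimes r^{k}p'^{j}=[p,p'].
$$
Granting the standard principle that twisting a quasitriangular Hopf algebra $(A,\Delta,\mathcal{R})$ by an invertible twist $W$ yields the quasitriangular Hopf algebra $\bigl(A,\;W\Delta(\cdot)W^{-1},\;W_{21}\mathcal{R}W^{-1}\bigr)$, the theorem follows at once, since $A=H\otimes H'$, $\Delta^{W}=\bar\Delta$, and $\mathcal{R}^{W}=[p,p']$.

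The one point that genuinely needs checking — and which I expect to be the main obstacle — is that $W$ is a legitimate twist, i.e. that it satisfies the $2$-cocycle identity $(W\otimes 1)(\Delta_{H\otimes H'}\otimes \id)(W)=(1\otimes W)(\id\otimes\Delta_{H\otimes H'})(W)$ together with the normalization $(\varepsilon\otimes\varepsilon\otimes\id)(W)=(\id\otimes\varepsilon\otimes\varepsilon)(W)=1$. This is exactly where the defining relations \eqref{eq:2.13} and \eqref{eq:2.14} of a \emph{weak} $\mathfrak{R}$-matrix (as opposed to a full $\mathfrak{R}$-matrix) are used, and they are precisely strong enough to make $\bar\Delta$ coassociative — a fact already guaranteed by Theorem~\ref{thm:2.8} — and the twist valid; the normalization uses $(\varepsilon\otimes\id)(r)=(\id\otimes\varepsilon)(r)=1$, which one deduces from \eqref{eq:2.13}--\eqref{eq:2.14} and the invertibility of $r$. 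If one prefers a self-contained argument avoiding the twisting theorem, the same four axioms can be checked directly: \eqref{eq:2.12} is a short counit computation using $\varepsilon(p_{i})p^{i}=1$, $\varepsilon(p'_{j})p'^{j}=1$ and the normalizations of $r,R$; \eqref{eq:2.9} and \eqref{eq:2.10} follow by expanding $\bar\Delta$ on each leg of $[p,p']$ and collapsing via the corresponding axioms for $p$, $p'$ and via \eqref{eq:2.13}, \eqref{eq:2.14}; and the quasi-cocommutativity axiom \eqref{eq:2.11} need only be tested on the algebra generators $h\otimes 1$ and $1\otimes h'$. I expect \eqref{eq:2.11} to be the most delicate step, since $\bar\Delta$ entangles the $H$- and $H'$-factors, forcing one to use the intertwining relations for $p$ and $p'$ simultaneously with the weak-$\mathfrak{R}$-matrix relations for $r$ and $R$.
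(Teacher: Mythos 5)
You are proving a statement for which the paper itself supplies no proof: Theorem \ref{thm:2.9} is background recalled from \cite{Ch} (Theorem 2.2 there), where it is established by directly verifying the axioms of Definition \ref{de:2.4} for $[p,p']$ using weak $\mathfrak{R}$-matrix identities. Your twisting argument is therefore a genuinely different and more conceptual route, and it is correct. The two identifications at its heart check out slot by slot: $\bar\Delta=W\Delta_{H\otimes H'}(\cdot)W^{-1}$ with $W=(1\otimes r^{i})\otimes(r_{i}\otimes 1)$, and $W_{21}\mathcal{R}W^{-1}=[p,p']$ for $\mathcal{R}=(p_{i}\otimes p'_{j})\otimes(p^{i}\otimes p'^{j})$. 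Moreover, the step you flag as the main obstacle does go through, and it is worth recording the three-line computation that closes your argument: by \eqref{eq:2.14},
\[
(\Delta_{H\otimes H'}\otimes \id)(W)=(1\otimes r^{j})\otimes(1\otimes r^{i})\otimes(r_{i}r_{j}\otimes 1),
\]
so $(W\otimes 1)(\Delta_{H\otimes H'}\otimes \id)(W)=(1\otimes r^{k}r^{j})\otimes(r_{k}\otimes r^{i})\otimes(r_{i}r_{j}\otimes 1)$, while by \eqref{eq:2.13},
\[
(\id\otimes \Delta_{H\otimes H'})(W)=(1\otimes r^{i}r^{j})\otimes(r_{i}\otimes 1)\otimes(r_{j}\otimes 1),
\]
so $(1\otimes W)(\id\otimes \Delta_{H\otimes H'})(W)=(1\otimes r^{i}r^{j})\otimes(r_{i}\otimes r^{k})\otimes(r_{k}r_{j}\otimes 1)$; renaming the dummy indices $i\leftrightarrow k$ (which label independent copies of $r$) shows the two sides coincide, so $W$ is a $2$-cocycle. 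The counital normalization also holds: applying $\varepsilon\otimes\id\otimes\id$ to \eqref{eq:2.13} gives $r=(1\otimes\varepsilon(r_{i})r^{i})r$, hence $\varepsilon(r_{i})r^{i}=1_{H'}$ by invertibility of $r$, and applying $\id\otimes\id\otimes\varepsilon$ to \eqref{eq:2.14} gives $\varepsilon(r^{i})r_{i}=1_{H}$. One further point you can dispose of for free: there is no need to check that the antipode produced by the twisting theorem agrees with the $\bar S$ of Theorem \ref{thm:2.8}, since a bialgebra admits at most one antipode, so the twisted Hopf algebra is $H\bowtie^{r}H'$ on the nose. As for what each approach buys: Chen's computational proof is self-contained, whereas yours is much shorter and explains where the otherwise unmotivated formula for $[p,p']$ comes from --- it is exactly the Drinfeld-twisted $\mathfrak{R}$-matrix of the tensor product --- at the price of invoking the standard twisting theorem for quasitriangular Hopf algebras.
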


\section{New oriented quantum algebra structures}
 In this section, we will give the OQA structure on the tensor product of two OQAs.

\subsection{OQA nonuple}

\begin{defi}\label{de:3.1} {\rm
 Let $H$ and $H'$ be algebras. Suppose that the elements $p\in H\otimes H,~p'\in H'\otimes H'$ and $r\in H\otimes H'$ are invertible.  Let $D, U: H\rightarrow H$ be commuting algebra automorphisms, $D',U': H'\rightarrow H'$ be commuting algebra automorphisms, and $p, p', r, D, U, D', U'$ satisfy the following conditions:
 \begin{eqnarray}
 &&D(R_{l})r_{k}\otimes U'(r^{k})R^{l}=1_{H}\otimes 1_{H'}=r_{k}D(R_{l})\otimes R^{l}U'(r^{k});\label{eq:3.1}\\
 &&(D\otimes D')(r)=r=(U\otimes U')(r);\label{eq:3.2}\\
 &&p_{i}r_{k}\otimes p^{i}r_{s}\otimes r^{k}r^{s}=r_{k}p_{i}\otimes r_{s}p^{i}\otimes r^{s}r^{k};\label{eq:3.3}\\
 &&r_{k}r_{s}\otimes r^{k}p'_{j}\otimes r^{s}p'^{j}=r_{s}r_{k}\otimes p'_{j}r^{k}\otimes p'^{j}r^{s},\label{eq:3.4}
 \end{eqnarray}
 where $p=p_{i}\otimes p^{i},~p'=p'_{j}\otimes p'^{j}$, $r=r_{k}\otimes r^{k}=r_{s}\otimes r^{s}$ and $r^{-1}=R=R_{l}\otimes R^{l}$. Then we call $(H, H', p, p', r, D, U, D', U')$ an {\bf OQA nonuple}.}
\end{defi}

\begin{rmk}\label{rmk:3.2}
 (1) When $(H,p,D,U)$ is an OQA, $(H, H, p, p, p,$  $D, U, D, U)$ is an OQA nonuple.\newline
 \indent{\phantom{\bf Remarks}} (2) If $(H,H',p,p',r,D,U,D',U')$ is an OQA nonuple, then $(H,H',p,p',r,U,D,U',$ $D')$ is
 also an OQA nonuple because $D,U$ and $D',U'$ are commuting algebra automorphisms, respectively.\newline
 \indent{\phantom{\bf Remarks}} (3) By Eqs.(\ref{eq:3.3}) and (\ref{eq:3.4}), we can get the following identities:
 \begin{eqnarray}
 &&r_{s}\otimes r_{k}\otimes r^{s} r^{k}=P_{l}  r_{s} p_{i}\otimes P^{l} r_{k} p^{i}\otimes r^{k} r^{s};\label{eq:3.5}\\
 &&r_{k}\otimes p'_{j}\otimes r^{k} p'^{j}=R_{l} r_{s} r_{k}\otimes R^{l} p'_{j} r^{k}\otimes p'^{j} r^{s};\label{eq:3.6}\\
 &&R^{l}\otimes p_{i}\otimes R_{l}p^{i}=r^{k} R^{m}R^{l}\otimes r_{k} p_{i} R_{l}\otimes p^{i} R_{m};\label{eq:3.7}\\
 &&R^{m}\otimes R^{l}\otimes R_{m} R_{l}=P'_{n} R^{m} p'_{j}\otimes P'^{n} R^{l} p'^{j}\otimes R_{l} R_{m}.\label{eq:3.8}
 \end{eqnarray}
\end{rmk}

\begin{pro}\label{pro:3.3}
 Assume that $(H, p)$, $(H', p')$ are quasitriangular Hopf algebras, where $p=p_{i}\otimes p^{i}\in H\otimes H,~p'=p'_{j}\otimes p'^{j}\in H'\otimes H'$, and $r=r_{k}\otimes r^{k}=r_{s}\otimes r^{s}\in H\otimes H'$ is a weak $\mathfrak{R}$-matrix of $(H, H')$. Then Eqs.(\ref{eq:3.3}) and (\ref{eq:3.4}) hold.
\end{pro}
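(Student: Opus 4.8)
The plan is to prove Eqs.~(\ref{eq:3.3}) and~(\ref{eq:3.4}) as mirror images of one another. Equation~(\ref{eq:3.3}) will be driven by the quasitriangularity of $(H,p)$ together with the first weak $\mathfrak{R}$-matrix axiom~(\ref{eq:2.13}), while~(\ref{eq:3.4}) will be driven symmetrically by the quasitriangularity of $(H',p')$ and the second axiom~(\ref{eq:2.14}). Throughout I write $r=a\otimes b$ with $a\in H$, $b\in H'$ as a bookkeeping device for the formal sums, so that $\Delta(a)=a_{1}\otimes a_{2}$ and $\Delta(b)=b_{1}\otimes b_{2}$.

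For Eq.~(\ref{eq:3.3}), I would first recognize its left-hand side as $p$ acting on the two $H$-legs of $(\Delta\otimes id)(r)$: by axiom~(\ref{eq:2.13}) one has $r_{k}\otimes r_{s}\otimes r^{k}r^{s}=\Delta(a)\otimes b$, whence
$$p_{i}r_{k}\otimes p^{i}r_{s}\otimes r^{k}r^{s}=\big(p\,\Delta(a)\big)\otimes b.$$
Then I would invoke the quasitriangularity relation~(\ref{eq:2.11}) for $(H,p)$ in the form $p\,\Delta(a)=\Delta^{cop}(a)\,p=a_{2}p_{i}\otimes a_{1}p^{i}$; re-expanding $\Delta(a)\otimes b$ back into $r$-notation yields $r_{s}p_{i}\otimes r_{k}p^{i}\otimes r^{k}r^{s}$, and a relabeling of the two independent summation copies $k\leftrightarrow s$ turns this into the right-hand side of~(\ref{eq:3.3}).

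Equation~(\ref{eq:3.4}) I would treat by the same three moves with the two tensor factors interchanged. Reading $(id\otimes\Delta)(r)=a\otimes\Delta(b)$ off axiom~(\ref{eq:2.14}) as $r_{k}r_{s}\otimes r^{s}\otimes r^{k}$, the left-hand side of~(\ref{eq:3.4}) is $p'=p'_{j}\otimes p'^{j}$ acting on the right of the two $H'$-legs, producing $\Delta^{cop}(b)\,p'$ there; quasitriangularity~(\ref{eq:2.11}) for $(H',p')$ replaces this by $p'\,\Delta(b)=p'_{j}b_{1}\otimes p'^{j}b_{2}$, and substituting back and relabeling $k\leftrightarrow s$ gives~(\ref{eq:3.4}).

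These manipulations are purely formal and use neither the invertibility of $r$ nor the antipodes, so no existence or convergence issue arises. The step I expect to be most delicate is the index bookkeeping: one must keep straight which coproduct ($\Delta$ on $H$ versus on $H'$) each axiom refers to, track the order reversal introduced when passing from $\Delta$ to $\Delta^{cop}$ via~(\ref{eq:2.11}), and justify the final transposition $k\leftrightarrow s$ by the fact that $r_{k}\otimes r^{k}$ and $r_{s}\otimes r^{s}$ denote the same element and are therefore interchangeable summation copies.
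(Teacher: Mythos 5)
Your proposal is correct and follows essentially the same route as the paper: both use the weak $\mathfrak{R}$-matrix axioms (\ref{eq:2.13})/(\ref{eq:2.14}) to rewrite each side as $p$ (resp.\ $p'$) multiplied against a coproduct leg of $r$, then invoke the intertwining relation (\ref{eq:2.11}) and re-expand. The only cosmetic differences are that you run the chain from left-hand side to right-hand side (the paper goes the other way for Eq.~(\ref{eq:3.4})) and that you write out the symmetric case (\ref{eq:3.3}) explicitly, which the paper leaves to the reader.
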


\begin{proof} We only check Eq.(\ref{eq:3.4}) as follows:
 \begin{eqnarray*}
 r_{s}r_{k}\otimes p'_{j}r^{k}\otimes p'^{j}r^{s}
 &\stackrel{(\ref{eq:2.14})}{=}&r_{k}\otimes p'_{j}{r^{k}}_1\otimes p'^{j}{r^{k}}_2\\
 &\stackrel{(\ref{eq:2.11})}{=}&r_{k}\otimes {r^{k}}_2p'_{j}\otimes {r^{k}}_1p'^{j}\\
 &\stackrel{(\ref{eq:2.14})}{=}&r_{k}r_{s}\otimes r^{k}p'_{j}\otimes r^{s}p'^{j},
 \end{eqnarray*}
 finishing the proof.     \end{proof}

\begin{ex}\label{ex:3.4}
 Let $A=M_n(K)$ be the algebra of $n\times n$ matrices. For  $1\leq i, j\leq n$, let $E_{ij}\in M_n(K)$ be the $n\times n$ matrix which has a single non-zero entry, the value $1$ located in the $i$th row and $j$th column. Then $\{E_{ij}\}_{1\leq i, j\leq n}$ is the standard basis for $M_n(K)$ and $E_{ij}E_{lm}=\delta_{jl}E_{im}$ for all $1\leq i, j, l, m\leq n$.  Let $n\geq 2$, $a\in K^*$ satisfy $a^2\neq 1$.  Set
\begin{eqnarray*}
 &p_{a, n}=&\sum_{1\leq i< j\leq n}(a-a^{-1})E_{ij}\otimes E_{ji} +\sum_{i=1}^n aE_{ii}\otimes E_{ii}\\
 &&+\sum_{1\leq i< j\leq n} (E_{ii}\otimes E_{jj}+ E_{jj}\otimes E_{ii}).
 \end{eqnarray*}
 Then $(M_n(K), p_{a, n}, f, f)$ is an OQA, where
 $$
 f(E_{ij})=a^{i-j}E_{ij}
 $$
 for all $1\leq i, j\leq n$ (see \cite{KR2}).

 It follows that  $(H=M_2(K), p, t, t)$ and $(H'=M_3(K), p', t', t')$ are OQAs, where $t=t'=f$, and
 \begin{eqnarray*}
  &p=&(a-a^{-1})E_{12}\otimes E_{21} +\sum_{i=1}^2 aE_{ii}\otimes E_{ii}\\
 &&+E_{11}\otimes E_{22}+ E_{22}\otimes E_{11},\\
 &p'=&\sum_{1\leq i< j\leq 3}(a-a^{-1})E'_{ij}\otimes E'_{ji} +\sum_{i=1}^3 aE'_{ii}\otimes E'_{ii}\\
 &&+\sum_{1\leq i< j\leq 3}(E'_{ii}\otimes E'_{jj}+ E'_{jj} \otimes E'_{ii}).
 \end{eqnarray*}

 {\it Case I:} If
  $$
  r=E_{11}\otimes E'_{11}+E_{22}\otimes E'_{22}-E_{22}\otimes E'_{33}-E_{22}\otimes E'_{11}-E_{11}\otimes E'_{22}-E_{11}\otimes E'_{33}
  $$
 and its inverse $R=r$, then it is straight to check that $(H, H', p, p', r, t, t, t', t')$ is  an OQA nonuple.

 {\it Case II:} If
 \begin{eqnarray*}
 &r=&aE_{11}\otimes E'_{11}+aE_{22}\otimes E'_{22}+E_{22}\otimes E'_{33}+E_{22}\otimes E'_{11}+E_{11}\otimes E'_{22}\\
 &&+E_{11}\otimes E'_{33}+(a-a^{-1})E_{12}\otimes E'_{21}
 \end{eqnarray*}
 and its inverse is given by
  \begin{eqnarray*}
 &R=&a^{-1}E_{11}\otimes E'_{11}+a^{-1}E_{22}\otimes E'_{22}+E_{22}\otimes E'_{33}+E_{22}\otimes E'_{11}+E_{11}\otimes E'_{22}\\
 && +E_{11}\otimes E'_{33}+(a^{-1}-a)E_{12}\otimes E'_{21},
 \end{eqnarray*}
 then a straightforward verification shows that $(H, H', p, p', r, t, t, t', t')$ is  an OQA nonuple.
\end{ex}

\begin{thm}\label{thm:3.5}
 Suppose that $(H, p, D, U)$, $(H', p', D', U')$ are OQAs and $(H, H', p, p', r,D, U, D',$ $ U')$ , $(H, H', p, p', q, D, U, D', U')$ are two OQA nonuples, write $p=p_{i}\otimes p^{i}, p'=p'_{j}\otimes p'^{j}, P=p^{-1}=P_{l}\otimes P^{l}, P'=p'^{-1}=P'_{m}\otimes P'^{m}, r=r_{k}\otimes r^{k}, R=r^{-1}=R_{n}\otimes R^{n}, q=q_{s}\otimes q^{s}, Q=q^{-1}=Q_{t}\otimes Q^{t}$, and $p, p', r, q$ satisfy the following conditions:
 \begin{eqnarray}
 && Q^{t} p'_{j}\otimes Q_{t} r_{k}\otimes p'^{j} r^{k}=p'_{j} Q^{t}\otimes r_{k} Q_{t}\otimes r^{k} p'^{j};\label{eq:3.9}\\
 && r_{k} p_{i}\otimes r^{k} Q^{t}\otimes p^{i} Q_{t}=p_{i} r_{k}\otimes Q^{t} r^{k}\otimes Q_{t} p^{i}.\label{eq:3.10}
 \end{eqnarray}
 Then ($H\otimes H'$, $\tilde{\alpha}$, $\tilde{D}$, $\tilde{U}$) is an OQA, where
 $$
 ~~~~~\tilde{\alpha}=r_{k} p_{i}\otimes p'_{j} Q^{t}\otimes p^{i} Q_{t}\otimes r^{k} p'^{j},~\tilde{D}=D\otimes D',~\tilde{U}=U\otimes U';
 $$
 $$
 \hbox{and~~}\tilde{\alpha}^{-1}=P_{l} R_{n}\otimes q^{s} P'_{m}\otimes q_{s} P^{l}\otimes P'^{m} R^{n}.
 $$
\end{thm}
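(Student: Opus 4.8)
The plan is to verify directly the three defining conditions of an oriented quantum algebra from Definition~\ref{de:1.1}, in the expanded index forms (\ref{eq:2.4})--(\ref{eq:2.6}), for the quadruple $(H\otimes H',\tilde\alpha,\tilde D,\tilde U)$, viewing $\tilde\alpha$ as an element of $(H\otimes H')^{\otimes 2}\cong H\otimes H'\otimes H\otimes H'$ whose four tensor legs carry, respectively, the $p$-, $p'$-, $r$- and $q$-strands displayed in the statement. At the outset one records that $\tilde D=D\otimes D'$ and $\tilde U=U\otimes U'$ are commuting algebra automorphisms of $H\otimes H'$, since $D,U$ commute on $H$ and $D',U'$ commute on $H'$. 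The shape of $\tilde\alpha$ generalizes the quasitriangular element $[p,p']$ of Theorem~\ref{thm:2.9}, with the inverse $r^{-1}$ replaced by $q^{-1}$; but because $H\otimes H'$ is here only an algebra, the verification must be carried out directly from the OQA axioms rather than through Chen's coproduct computation.

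The invariance condition (\ref{eq:2.5}) is the easy one. Each strand $(p_{i},p^{i})$, $(p'_{j},p'^{j})$, $(r_{k},r^{k})$ and $(Q_{t},Q^{t})$ is fixed by the relevant factors of $D,D'$ (respectively $U,U'$): the first two by the OQA invariance of $p$ and $p'$, and the last two by the nonuple condition (\ref{eq:3.2}) applied to both $r$ and $q$ (recall $(D\otimes D')(q)=q$ forces $(D\otimes D')(q^{-1})=q^{-1}$). Since each leg of $\tilde\alpha$ is assembled from these strands, $(\tilde D\otimes\tilde D)(\tilde\alpha)=\tilde\alpha=(\tilde U\otimes\tilde U)(\tilde\alpha)$. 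I would then confirm that the stated $\tilde\alpha^{-1}$ is a two-sided inverse: in the leg-wise product of $\tilde\alpha$ with $\tilde\alpha^{-1}$ each of $p,p',r,q$ already stands next to its inverse strand $P,P',R,Q$, so iterating $pP=p'P'=rR=qQ=1$ collapses the product to $1\otimes 1\otimes 1\otimes 1$ with no reordering needed, and the reverse product is identical.

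For the twisted inverse (\ref{eq:2.4}) I would form the product of $(\tilde D\otimes I)(\tilde\alpha^{-1})$ with $(I\otimes\tilde U)(\tilde\alpha)$ in $(H\otimes H')\otimes(H\otimes H')^{op}$ and watch each strand cancel against its partner carrying exactly the twist some known relation supplies: the $p$-strand through the OQA twisted inverse (\ref{eq:2.4}) for $p$, the $p'$-strand through the same for $p'$, and the $r$-strand through the nonuple condition (\ref{eq:3.1}) for $r$. The $q$-strand acquires the automorphisms $U$ on $Q_{t}$ and $D'$ on $q^{s}$, which is precisely (\ref{eq:3.1}) for $q$ read with the swapped pair $(U,D,U',D')$, admissible by Remark~\ref{rmk:3.2}(2). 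Carrying out the four cancellations in the order $p'$, $q$, $r$, $p$ keeps the pair of legs in play adjacent at each stage, so the product again collapses to $1\otimes 1\otimes 1\otimes 1$; the reverse product is symmetric.

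The substantial work is the Yang--Baxter relation (\ref{eq:2.6}) for $\tilde\alpha$. Expanding $\tilde\alpha_{12}\tilde\alpha_{13}\tilde\alpha_{23}$ and $\tilde\alpha_{23}\tilde\alpha_{13}\tilde\alpha_{12}$ in $(H\otimes H')^{\otimes 3}$ and regrouping the $H$- and $H'$-slots (so that the ambient algebra becomes $H^{\otimes 3}\otimes (H')^{\otimes 3}$) produces long products of three copies each of the four strands spread over six slots. The strategy is to drive both sides to a common normal form: the QYBE (\ref{eq:2.6}) for $p$ reorders the three $p$-copies in the $H$-slots and (\ref{eq:2.6}) for $p'$ does the same in the $H'$-slots, while the cross relations (\ref{eq:3.3}), (\ref{eq:3.4}), the derived identities (\ref{eq:3.5})--(\ref{eq:3.8}), and the new hypotheses (\ref{eq:3.9}), (\ref{eq:3.10}) transport the $r$- and $q$-legs past the $p$- and $p'$-legs into the positions required on the opposite side. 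Conditions (\ref{eq:3.9}) and (\ref{eq:3.10}) are exactly the moves that let the $q$- and $Q$-legs commute past the $r$- and $p,p'$-legs, and they are what forces the two sides to coincide; this is also where the generality $q\neq r$ genuinely enters, the case $q=r$ recovering the content implicit in Theorem~\ref{thm:2.9}. I expect this step---the bookkeeping of which of (\ref{eq:2.6}), (\ref{eq:3.3})--(\ref{eq:3.10}) applies at each of the many reorderings, and checking that both sides land in the same normal form---to be the main obstacle of the proof.
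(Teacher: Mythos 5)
Your setup and the three ``easy'' verifications are sound and follow the same route as the paper: $\tilde D,\tilde U$ are commuting automorphisms; the stated $\tilde\alpha^{-1}$ is indeed a two-sided inverse by the nested cancellations $qQ$, $pP$, $p'P'$, $rR$ exactly as you describe; Eq.~(\ref{eq:2.2}) follows from (\ref{eq:2.2}) for $p,p'$ and (\ref{eq:3.2}) for $r,q$; and Eq.~(\ref{eq:2.1}) follows from (\ref{eq:2.1}) for $p,p'$ together with (\ref{eq:3.1}) for $r$ and for $q$ --- your observation that the $q$-strand needs (\ref{eq:3.1}) in the swapped form $(U,D,U',D')$, justified by Remark~\ref{rmk:3.2}(2), is correct and in fact more explicit than the paper, which cites (\ref{eq:3.1}) without comment.

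The genuine gap is the Yang--Baxter condition (\ref{eq:2.3}), which is the entire substance of the theorem and occupies essentially all of the paper's proof. You name the right toolkit ((\ref{eq:2.7}) for $p$ and $p'$, the nonuple identities (\ref{eq:3.5})--(\ref{eq:3.8}), and the new hypotheses (\ref{eq:3.9})--(\ref{eq:3.10})) but you never produce the actual chain of equalities, and you concede that the bookkeeping is ``the main obstacle.'' Identifying ingredients is not the same as showing they suffice: the paper's argument is a specific, non-obvious sequence in which (\ref{eq:3.9}) and (\ref{eq:3.10}) are first recast as the transport identities
\begin{equation*}
p'_{j}\otimes r_{k}\otimes p'^{j}r^{k}=q^{s}p'_{j}Q^{t}\otimes q_{s}r_{k}Q_{t}\otimes r^{k}p'^{j},
\qquad
p_{i}\otimes Q^{t}\otimes p^{i}Q_{t}=R_{n}p_{i}r_{k}\otimes R^{n}Q^{t}r^{k}\otimes Q_{t}p^{i},
\end{equation*}
and then $\tilde\alpha_{12}\tilde\alpha_{13}\tilde\alpha_{23}$ is rewritten step by step --- applying the first identity, then (\ref{eq:2.7}) for $p'$ (which \emph{inserts} an inverse pair $p'_{j}P'_{m}\cdots p'^{j}P'^{m}$), then (\ref{eq:3.5}), (\ref{eq:3.6}) (inserting $r_{k}R_{n}\cdots r^{k}R^{n}$), (\ref{eq:3.7}), (\ref{eq:3.8}), then (\ref{eq:2.7}) for $p$, and finally the second identity --- until the expression regroups as $\tilde\alpha_{23}\tilde\alpha_{13}\tilde\alpha_{12}$. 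This is a one-directional rewriting with carefully chosen insertions of inverse pairs, not a reduction of both sides to a common normal form, and without exhibiting such a sequence (or some equivalent argument) you have not proved that the hypotheses (\ref{eq:3.9})--(\ref{eq:3.10}) are strong enough; that verification cannot be waved through as bookkeeping.
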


\begin{proof}
 Clearly $\tilde{\alpha}^{-1}=P_{l} R_{n}\otimes q^{s} P'_{m}\otimes q_{s} P^{l}\otimes P'^{m} R^{n}$. Next we will prove that Eqs.(\ref{eq:2.1}) and (\ref{eq:2.3}) hold for ($H\otimes H'$, $\tilde{\alpha}$, $\tilde{D}$, $\tilde{U}$).

 Firstly, Eq.(\ref{eq:2.1}) can be check as follows:
 \begin{eqnarray*}
 \tilde{D}({\tilde{\alpha}^{-1}}_{j}){\tilde{\alpha}}_{i}\otimes \tilde{U}(\tilde{\alpha}^{i})\tilde{\alpha}^{-1j}
 &=&D(P_{l})D(R_{n})r_{k}p_{i}\otimes D'(q^{s})\underline{D'(P'_{m})p'_{j}}Q^{t}\\
 &&\otimes U(p^{i})U(Q_{t})q_{s}P^{l}\otimes U'(r^{k})\underline{U'(p'^{j})P'^{m}}R^{n}\\
 &\stackrel{(\ref{eq:2.1})}{=}&D(P_{l})\underline{D(R_{n})r_{k}}p_{i}\otimes D'(q^{s})Q^{t}\\
 &&\otimes U(p^{i})U(Q_{t})q_{s}P^{l}\otimes \underline{U'(r^{k})R^{n}}\\
 &\stackrel{(\ref{eq:3.1})}{=}&D(P_{l})p_{i}\otimes \underline{D'(q^{s})Q^{t}}\otimes U(p^{i})\underline{U(Q_{t})q_{s}}P^{l}\otimes 1_{H'}\\
 &\stackrel{(\ref{eq:3.1})}{=}&\underline{D(P_{l})p_{i}}\otimes 1_{H'}\otimes \underline{U(p^{i})P^{l}}\otimes 1_{H'}\\
 &\stackrel{(\ref{eq:2.1})}{=}&1_H\otimes 1_{H'}\otimes 1_H\otimes 1_{H'}.
 \end{eqnarray*}
 While
 \begin{eqnarray*}
 {\tilde{\alpha}}_{i}\tilde{D}({\tilde{\alpha}^{-1}}_{j})\otimes \tilde{\alpha}^{-1j}\tilde{U}(\tilde{\alpha}^{i})
 &=&r_{k}\underline{p_{i}D(P_{l})}D(R_{n})\otimes p'_{j}Q^{t}D'(q^{s})D'(P'_{m})\\
 &&\otimes q_{s}\underline{P^{l}U(p^{i})}U(Q_{t})\otimes P'^{m}R^{n}U'(r^{k})U'(p'^{j})\\
 &\stackrel{(\ref{eq:2.1})}{=}&r_{k}D(R_{n})\otimes p'_{j}\underline{Q^{t}D'(q^{s})}D'(P'_{m})\\
 &&\otimes \underline{q_{s}U(Q_{t})}\otimes P'^{m}R^{n}U'(r^{k})U'(p'^{j})\\
 &\stackrel{(\ref{eq:3.1})}{=}&\underline{r_{k}D(R_{n})}\otimes p'_{j}D'(P'_{m})\otimes 1_H\otimes P'^{m}\underline{R^{n}U'(r^{k})}U'(p'^{j})\\
 &\stackrel{(\ref{eq:3.1})}{=}&1_H\otimes \underline{p'_{j}D'(P'_{m})}\otimes 1_H\otimes \underline{P'^{m}U'(p'^{j})}\\
 &\stackrel{(\ref{eq:2.1})}{=}&1_H\otimes 1_{H'}\otimes 1_H\otimes 1_{H'}.
 \end{eqnarray*}

 Secondly, since
 \begin{eqnarray*}
 (\tilde{D}\otimes \tilde{D})(\tilde{\alpha})
 &=&D(r_{k})D(p_{i})\otimes D'(p'_{j})D'(Q^{t})\\
 &&\otimes D(p^{i})D(Q_{t})\otimes D'(r^{k})D'(p'^{j})\stackrel{(\ref{eq:2.2})(\ref{eq:3.2})}{=}\tilde{\alpha},
 \end{eqnarray*}
 and $(\tilde{U}\otimes \tilde{U})(\tilde{\alpha})=\tilde{\alpha}$, Eq.(\ref{eq:2.2}) is satisfied.

 Thirdly, by Eqs.(\ref{eq:3.9}) and (\ref{eq:3.10}), we have
 \begin{eqnarray}
 &&p'_{j}\otimes r_{k}\otimes p'^{j}r^{k}=q^{s}p'_{j}Q^{t}\otimes q_{s}r_{k}Q_{t}\otimes r^{k}p'^{j};  \label{eq:3.11}\\
 &&p_{i}\otimes Q^{t}\otimes p^{i}Q_{t}=R_{n}p_{i}r_{k}\otimes R^{n}Q^{t}r^{k}\otimes Q_{t}p^{i}.  \label{eq:3.12}
 \end{eqnarray}
 Then
 \begin{eqnarray*}
 \tilde{\alpha}_{12}\tilde{\alpha}_{13}\tilde{\alpha}_{23}
 &\stackrel{}{=}&(r_{k}p_{i}\otimes p'_{j}Q^{t})(r_{\bar{k}}p_{\bar{i}}\otimes p'_{\bar{j}}Q^{\bar{t}})\otimes (p^{i}Q_{t}\otimes
 r^{k}p'^{j})(r_{\bar{\bar{k}}}p_{\bar{\bar{i}}}\otimes p'_{\bar{\bar{t}}}Q^{\bar{\bar{t}}})\\
 &&\otimes (p^{\bar{i}}Q_{\bar{t}}\otimes r^{\bar{k}}p'^{\bar{j}})(p^{\bar{\bar{i}}}Q_{\bar{\bar{t}}}\otimes r^{\bar{\bar{k}}}p'^{\bar{\bar{t}}})\\
 &\stackrel{}{=}&r_{k}p_{i}r_{\bar{k}}p_{\bar{i}}\otimes p'_{j}Q^{t}\underline{p'_{\bar{j}}}Q^{\bar{t}}\otimes p^{i}Q_{t}\underline{r_{\bar{\bar{k}}}}p_{\bar{\bar{i}}}\otimes
 r^{k}p'^{j}p'_{\bar{\bar{t}}}Q^{\bar{\bar{t}}}\\
 &&\otimes p^{\bar{i}}Q_{\bar{t}}p^{\bar{\bar{i}}}Q_{\bar{\bar{t}}}\otimes r^{\bar{k}}\underline{p'^{\bar{j}}r^{\bar{\bar{k}}}}p'^{\bar{\bar{t}}}\\
 &\stackrel{(\ref{eq:3.11})}{=}&r_{k}p_{i}r_{\bar{k}}p_{\bar{i}}\otimes p'_{j}\underline{Q^{t}q^{s}}p'_{\bar{j}}Q^{\bar{\bar{\bar{t}}}}Q^{\bar{t}}\otimes p^{i}\underline{Q_{t}q_{s}}r_{\bar{\bar{k}}}Q_{\bar{\bar{\bar{t}}}}p_{\bar{\bar{i}}}\otimes r^{k}p'^{j}p'_{\bar{\bar{t}}}Q^{\bar{\bar{t}}}\\
 &&\otimes p^{\bar{i}}Q_{\bar{t}}p^{\bar{\bar{i}}}Q_{\bar{\bar{t}}}\otimes r^{\bar{k}}r^{\bar{\bar{k}}}p'^{\bar{j}}p'^{\bar{\bar{t}}}\\
 &\stackrel{}{=}&r_{k}p_{i}r_{\bar{k}}p_{\bar{i}}\otimes p'_{j}\underline{p'_{\bar{j}}}Q^{t}Q^{\bar{t}}\otimes p^{i}r_{\bar{\bar{k}}}Q_{t}p_{\bar{\bar{i}}}\otimes r^{k}p'^{j}\underline{p'_{\bar{\bar{t}}}}Q^{\bar{\bar{t}}}\\
 &&\otimes p^{\bar{i}}Q_{\bar{t}}p^{\bar{\bar{i}}}Q_{\bar{\bar{t}}}\otimes r^{\bar{k}}r^{\bar{\bar{k}}}\underline{p'^{\bar{j}}p'^{\bar{\bar{t}}}}\\
 &\stackrel{(\ref{eq:2.7})}{=}&r_{k}p_{i}r_{\bar{k}}p_{\bar{i}}\otimes \underline{p'_{j}P'_{m}}p'_{\bar{j}}p'_{\bar{\bar{\bar{j}}}}Q^{t}Q^{\bar{t}}\otimes p^{i}r_{\bar{\bar{k}}}Q_{t}p_{\bar{\bar{i}}}\otimes
 r^{k}\underline{p'^{j}P'^{m}}p'_{\bar{\bar{t}}}p'^{\bar{\bar{\bar{j}}}}Q^{\bar{\bar{t}}}\\
 &&\otimes p^{\bar{i}}Q_{\bar{t}}p^{\bar{\bar{i}}}Q_{\bar{\bar{t}}}\otimes r^{\bar{k}}r^{\bar{\bar{k}}}p'^{\bar{\bar{t}}}p'^{\bar{j}}\\
 &\stackrel{}{=}& r_{k}p_{i}\underline{r_{\bar{k}}}p_{\bar{i}}\otimes p'_{\bar{j}}p'_{j}Q^{t}Q^{\bar{t}}\otimes p^{i}\underline{r_{\bar{\bar{k}}}}Q_{t}p_{\bar{\bar{i}}}\otimes
 r^{k}p'_{\bar{\bar{t}}}p'^{j}Q^{\bar{\bar{t}}}\\
 &&\otimes p^{\bar{i}}Q_{\bar{t}}p^{\bar{\bar{i}}}Q_{\bar{\bar{t}}}\otimes \underline{r^{\bar{k}}r^{\bar{\bar{k}}}}p'^{\bar{\bar{t}}}p'^{\bar{j}}\\
 &\stackrel{(\ref{eq:3.5})}{=}&r_{k}\underline{p_{i}P_{l}}r_{\bar{k}}p_{\bar{\bar{\bar{i}}}}p_{\bar{i}}\otimes p'_{\bar{j}}p'_{j}Q^{t}Q^{\bar{t}}\otimes \underline{p^{i}P^{l}}r_{\bar{\bar{k}}}p^{\bar{\bar{\bar{i}}}}Q_{t}p_{\bar{\bar{i}}}\otimes r^{k}p'_{\bar{\bar{t}}}p'^{j}Q^{\bar{\bar{t}}}\\
 &&\otimes p^{\bar{i}}Q_{\bar{t}}p^{\bar{\bar{i}}}Q_{\bar{\bar{t}}}\otimes r^{\bar{\bar{k}}}r^{\bar{k}}p'^{\bar{\bar{t}}}p'^{\bar{j}}\\
 &\stackrel{}{=}&r_{k}\underline{r_{\bar{k}}}p_{i}p_{\bar{i}}\otimes p'_{\bar{j}}p'_{j}Q^{t}Q^{\bar{t}}\otimes r_{\bar{\bar{k}}}p^{i}Q_{t}p_{\bar{\bar{i}}}\otimes
 r^{k}\underline{p'_{\bar{\bar{t}}}}p'^{j}Q^{\bar{\bar{t}}}\\
 &&\otimes p^{\bar{i}}Q_{\bar{t}}p^{\bar{\bar{i}}}Q_{\bar{\bar{t}}}\otimes r^{\bar{\bar{k}}}\underline{r^{\bar{k}}p'^{\bar{\bar{t}}}}p'^{\bar{j}}\\
 &\stackrel{(\ref{eq:3.6})}{=}&\underline{r_{k}R_{n}}r_{\bar{k}}r_{\bar{\bar{\bar{k}}}}p_{i}p_{\bar{i}}\otimes p'_{\bar{j}}p'_{j}Q^{t}Q^{\bar{t}}\otimes r_{\bar{\bar{k}}}p^{i}Q_{t}p_{\bar{\bar{i}}}\otimes
 \underline{r^{k}R^{n}}p'_{\bar{\bar{t}}}r^{\bar{\bar{\bar{k}}}}p'^{j}Q^{\bar{\bar{t}}}\\
 &&\otimes p^{\bar{i}}Q_{\bar{t}}p^{\bar{\bar{i}}}Q_{\bar{\bar{t}}}\otimes r^{\bar{\bar{k}}}p'^{\bar{\bar{t}}}r^{\bar{k}}p'^{\bar{j}}\\
 &\stackrel{}{=}&r_{\bar{k}}r_{k}p_{i}p_{\bar{i}}\otimes p'_{\bar{j}}p'_{j}Q^{t}\underline{Q^{\bar{t}}}\otimes r_{\bar{\bar{k}}}p^{i}Q_{t}\underline{p_{\bar{\bar{i}}}}\otimes
 p'_{\bar{\bar{t}}}r^{k}p'^{j}Q^{\bar{\bar{t}}}\\
 &&\otimes p^{\bar{i}}\underline{Q_{\bar{t}}p^{\bar{\bar{i}}}}Q_{\bar{\bar{t}}}\otimes r^{\bar{\bar{k}}}p'^{\bar{\bar{t}}}r^{\bar{k}}p'^{\bar{j}}\\
 &\stackrel{(\ref{eq:3.7})}{=}&r_{\bar{k}}r_{k}p_{i}p_{\bar{i}}\otimes p'_{\bar{j}}p'_{j}\underline{Q^{t}q^{s}}Q^{\bar{t}}Q^{\bar{\bar{\bar{t}}}}\otimes r_{\bar{\bar{k}}}p^{i}\underline{Q_{t}q_{s}}p_{\bar{\bar{i}}}Q_{\bar{\bar{\bar{t}}}}\otimes p'_{\bar{\bar{t}}}r^{k}p'^{j}Q^{\bar{\bar{t}}}\\
 &&\otimes p^{\bar{i}}p^{\bar{\bar{i}}}Q_{\bar{t}}Q_{\bar{\bar{t}}}\otimes r^{\bar{\bar{k}}}p'^{\bar{\bar{t}}}r^{\bar{k}}p'^{\bar{j}}\\
 &\stackrel{}{=}&r_{\bar{k}}r_{k}p_{i}p_{\bar{i}}\otimes p'_{\bar{j}}p'_{j}\underline{Q^{\bar{t}}}Q^{t}\otimes r_{\bar{\bar{k}}}p^{i}p_{\bar{\bar{i}}}Q_{t}\otimes
 p'_{\bar{\bar{t}}}r^{k}p'^{j}\underline{Q^{\bar{\bar{t}}}}\\
 &&\otimes p^{\bar{i}}p^{\bar{\bar{i}}}\underline{Q_{\bar{t}}Q_{\bar{\bar{t}}}}\otimes r^{\bar{\bar{k}}}p'^{\bar{\bar{t}}}r^{\bar{k}}p'^{\bar{j}}\\
 &\stackrel{(\ref{eq:3.8})}{=}&r_{\bar{k}}r_{k}p_{i}p_{\bar{i}}\otimes p'_{\bar{j}}\underline{p'_{j}P'_{m}}Q^{\bar{t}}p'_{\bar{\bar{\bar{j}}}}Q^{t}\otimes r_{\bar{\bar{k}}}p^{i}p_{\bar{\bar{i}}}Q_{t}\otimes
 p'_{\bar{\bar{t}}}r^{k}\underline{p'^{j}P'^{m}}Q^{\bar{\bar{t}}}p'^{\bar{\bar{\bar{j}}}}\\
 &&\otimes p^{\bar{i}}p^{\bar{\bar{i}}}Q_{\bar{\bar{t}}}Q_{\bar{t}}\otimes r^{\bar{\bar{k}}}p'^{\bar{\bar{t}}}r^{\bar{k}}p'^{\bar{j}}\\
 &\stackrel{}{=}&r_{\bar{k}}r_{k}p_{i}\underline{p_{\bar{i}}}\otimes p'_{\bar{j}}Q^{\bar{t}}p'_{j}Q^{t}\otimes r_{\bar{\bar{k}}}p^{i}\underline{p_{\bar{\bar{i}}}}Q_{t}\otimes
 p'_{\bar{\bar{t}}}r^{k}Q^{\bar{\bar{t}}}p'^{j}\\
 &&\otimes \underline{p^{\bar{i}}p^{\bar{\bar{i}}}}Q_{\bar{\bar{t}}}Q_{\bar{t}}\otimes r^{\bar{\bar{k}}}p'^{\bar{\bar{t}}}r^{\bar{k}}p'^{\bar{j}}\\
 &\stackrel{(\ref{eq:2.7})}{=}&r_{\bar{k}}r_{k}\underline{p_{i}P_{l}}p_{\bar{i}}p_{\bar{\bar{\bar{i}}}}\otimes p'_{\bar{j}}Q^{\bar{t}}p'_{j}Q^{t}\otimes  r_{\bar{\bar{k}}}\underline{p^{i}P^{l}}p_{\bar{\bar{i}}}p^{\bar{\bar{\bar{i}}}}Q_{t}\otimes p'_{\bar{\bar{t}}}r^{k}Q^{\bar{\bar{t}}}p'^{j}\\
 &&\otimes p^{\bar{\bar{i}}}p^{\bar{i}}Q_{\bar{\bar{t}}}Q_{\bar{t}}\otimes r^{\bar{\bar{k}}}p'^{\bar{\bar{t}}}r^{\bar{k}}p'^{\bar{j}}\\
 &\stackrel{}{=}&r_{\bar{k}}r_{k}\underline{p_{\bar{i}}}p_{i}\otimes p'_{\bar{j}}Q^{\bar{t}}p'_{j}Q^{t}\otimes r_{\bar{\bar{k}}}p_{\bar{\bar{i}}}p^{i}Q_{t}\otimes
 p'_{\bar{\bar{t}}}r^{k}\underline{Q^{\bar{\bar{t}}}}p'^{j}\\
 &&\otimes p^{\bar{\bar{i}}}\underline{p^{\bar{i}}Q_{\bar{\bar{t}}}}Q_{\bar{t}}\otimes r^{\bar{\bar{k}}}p'^{\bar{\bar{t}}}r^{\bar{k}}p'^{\bar{j}}\\
 &\stackrel{(\ref{eq:3.12})}{=}&r_{\bar{k}}\underline{r_{k}R_{n}}p_{\bar{i}}r_{\bar{\bar{\bar{k}}}}p_{i}\otimes p'_{\bar{j}}Q^{\bar{t}}p'_{j}Q^{t}\otimes r_{\bar{\bar{k}}}p_{\bar{\bar{i}}}p^{i}Q_{t}\otimes
 p'_{\bar{\bar{t}}}\underline{r^{k}R^{n}}Q^{\bar{\bar{t}}}r^{\bar{\bar{\bar{k}}}}p'^{j}\\
 &&\otimes p^{\bar{\bar{i}}}Q_{\bar{\bar{t}}}p^{\bar{i}}Q_{\bar{t}}\otimes r^{\bar{\bar{k}}}p'^{\bar{\bar{t}}}r^{\bar{k}}p'^{\bar{j}}\\
 &\stackrel{}{=}&r_{\bar{k}}p_{\bar{i}}r_{k}p_{i}\otimes p'_{\bar{j}}Q^{\bar{t}}p'_{j}Q^{t}\otimes r_{\bar{\bar{k}}}p_{\bar{\bar{i}}}p^{i}Q_{t}\otimes
 p'_{\bar{\bar{t}}}Q^{\bar{\bar{t}}}r^{k}p'^{j}\\
 &&\otimes p^{\bar{\bar{i}}}Q_{\bar{\bar{t}}}p^{\bar{i}}Q_{\bar{t}}\otimes r^{\bar{\bar{k}}}p'^{\bar{\bar{t}}}r^{\bar{k}}p'^{\bar{j}}\\
 &\stackrel{}{=}&(r_{\bar{k}}p_{\bar{i}}\otimes p'_{\bar{j}}Q^{\bar{t}})(r_{k}p_{i}\otimes p'_{j}Q^{t})\otimes (r_{\bar{\bar{k}}}p_{\bar{\bar{i}}}\otimes
 p'_{\bar{\bar{t}}}Q^{\bar{\bar{t}}})(p^{i}Q_{t}\otimes r^{k}p'^{j})\\
 &&\otimes (p^{\bar{\bar{i}}}Q_{\bar{\bar{t}}}\otimes r^{\bar{\bar{k}}}p'^{\bar{\bar{t}}})(p^{\bar{i}}Q_{\bar{t}}\otimes r^{\bar{k}}p'^{\bar{j}})\\
 &\stackrel{}{=}&\tilde{\alpha}_{23}\tilde{\alpha}_{13}\tilde{\alpha}_{12}.
 \end{eqnarray*}
 So, Eq.(\ref{eq:2.3}) is satisfied for $H\otimes H'$.  Thus ($H\otimes H'$, $\tilde{\alpha}$, $\tilde{D}$, $\tilde{U}$) is an OQA. \end{proof}

\subsection{OQA structures on the tensor product of two OQAs}

\begin{thm}\label{thm:3.6}
 Suppose that $(H, p, D, U)$, $(H', p', D', U')$ are OQAs and $(H, H', p, p', r,D, U, D',$ $ U')$ is an OQA nonuple, write $p=p_{i}\otimes p^{i}, p'=p'_{j}\otimes p'^{j}, P=p^{-1}=P_{l}\otimes P^{l}, P'=p'^{-1}=P'_{m}\otimes P'^{m}, r=r_{k}\otimes r^{k}, R=r^{-1}=R_{n}\otimes R^{n}$, then
 ($H\otimes H'$, $\tilde{\alpha}$, $\tilde{D}$, $\tilde{U}$) is an OQA, where
 $$
 ~~~~~\tilde{\alpha}=r_{k}p_{i}\otimes p'_{j}R^{n}\otimes p^{i}R_{n}\otimes r^{k}p'^{j},~\tilde{D}=D\otimes D',~\tilde{U}=U\otimes U';
 $$
 $$
 \hbox{and~~}\tilde{\alpha}^{-1}=P_{l}R_{n}\otimes r^{k}P'_{m}\otimes r_{k}P^{l}\otimes P'^{m}R^{n}.
 $$
\end{thm}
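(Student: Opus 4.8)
The plan is to deduce this theorem from Theorem~\ref{thm:3.5} by specializing the second nonuple to coincide with the first, i.e. by taking $q=r$. Under this choice one has $Q=q^{-1}=r^{-1}=R$, so that $q_{s}=r_{k}$, $q^{s}=r^{k}$, $Q_{t}=R_{n}$ and $Q^{t}=R^{n}$; consequently the element $\tilde{\alpha}=r_{k}p_{i}\otimes p'_{j}Q^{t}\otimes p^{i}Q_{t}\otimes r^{k}p'^{j}$ of Theorem~\ref{thm:3.5} becomes exactly $r_{k}p_{i}\otimes p'_{j}R^{n}\otimes p^{i}R_{n}\otimes r^{k}p'^{j}$, while its inverse $P_{l}R_{n}\otimes q^{s}P'_{m}\otimes q_{s}P^{l}\otimes P'^{m}R^{n}$ becomes the stated $\tilde{\alpha}^{-1}$. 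Since $(H,H',p,p',r,D,U,D',U')$ is an OQA nonuple by hypothesis, the only point requiring verification before invoking Theorem~\ref{thm:3.5} is that the two extra compatibility conditions Eqs.(\ref{eq:3.9}) and (\ref{eq:3.10}) hold automatically once $q=r$.

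For Eq.(\ref{eq:3.9}) with $q=r$, I would first rewrite Eq.(\ref{eq:3.4}) in tensor-leg form. Placing $r$ and $p'$ in the legs of $H\otimes H'\otimes H'$, a direct check identifies Eq.(\ref{eq:3.4}) with the braid-type identity $r_{12}r_{13}p'_{23}=p'_{23}r_{13}r_{12}$. As $r$ is invertible in $H\otimes H'$ with inverse $R$, the element $R_{12}=R_{n}\otimes R^{n}\otimes 1$ is the inverse of $r_{12}$ in $H\otimes H'\otimes H'$; multiplying the braid identity on the left and on the right by $R_{12}$ and cancelling the pairs $R_{12}r_{12}=r_{12}R_{12}=1_{H}\otimes 1_{H'}\otimes 1_{H'}$ gives $R_{12}p'_{23}r_{13}=r_{13}p'_{23}R_{12}$. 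Reading off the components and transposing the first two (the $H'$ and $H$) legs, this is precisely Eq.(\ref{eq:3.9}) with $Q=R$.

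The argument for Eq.(\ref{eq:3.10}) with $q=r$ is parallel, starting instead from Eq.(\ref{eq:3.3}). Placing $p$ and $r$ in the legs of $H\otimes H\otimes H'$, Eq.(\ref{eq:3.3}) is equivalent to $p_{12}r_{13}r_{23}=r_{23}r_{13}p_{12}$; conjugating by $R_{23}=1\otimes R_{n}\otimes R^{n}$, the inverse of $r_{23}$, yields $R_{23}p_{12}r_{13}=r_{13}p_{12}R_{23}$, which after transposing the last two legs is exactly Eq.(\ref{eq:3.10}) with $Q=R$. With Eqs.(\ref{eq:3.9}) and (\ref{eq:3.10}) thus established, Theorem~\ref{thm:3.5} applies verbatim and produces the OQA $(H\otimes H',\tilde{\alpha},\tilde{D},\tilde{U})$ together with the asserted formula for $\tilde{\alpha}^{-1}$.

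The only genuine bookkeeping hazard is keeping track of which tensor leg carries the $H$-component and which carries the $H'$-component of $r$ and of $R$, since these placements differ between Eqs.(\ref{eq:3.3}) and (\ref{eq:3.4}), and the invertibility relation $rR=Rr=1_{H}\otimes 1_{H'}$ must be applied in the correct leg. Beyond this there is no real obstacle: no new construction or estimate is needed, and the long verification of the quantum Yang--Baxter equation Eq.(\ref{eq:2.3}) has already been carried out once and for all in the proof of Theorem~\ref{thm:3.5}.
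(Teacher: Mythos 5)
Your proposal is correct and follows exactly the paper's route: the paper's entire proof of Theorem~\ref{thm:3.6} is ``Let $q=r$ in Theorem~\ref{thm:3.5}.'' Your additional work---recasting Eqs.~(\ref{eq:3.3}) and (\ref{eq:3.4}) as the braid-type identities $p_{12}r_{13}r_{23}=r_{23}r_{13}p_{12}$ and $r_{12}r_{13}p'_{23}=p'_{23}r_{13}r_{12}$ and conjugating by $R_{23}$, resp.\ $R_{12}$, to show that the hypotheses (\ref{eq:3.9}) and (\ref{eq:3.10}) hold automatically when $q=r$---correctly fills in a verification that the paper leaves implicit.
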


\begin{proof} Let $q=r$ in Theorem \ref{thm:3.5}. \end{proof}

\subsection{OQA structures on the tensor product of an OQA with itself}

\begin{thm}\label{thm:3.7}
 Suppose that $(H, p, D, U)$ is an OQA, and write $p=p_{i}\otimes p^{i}=p_{j}\otimes p^{j}=p_{k}\otimes p^{k}$, $p^{-1}=P=P_{l}\otimes P^{l}$, then $(H\otimes H, \tilde{\alpha}, \tilde{D}, \tilde{U})$ is an OQA, where
 $$
 ~~~~~\tilde{\alpha}=p_{i}p_{j}\otimes  p_{k}P^{l}\otimes  p^{j}P_{l}\otimes  p^{i}p^{k},~\tilde{D}=D\otimes D\hbox{~and~}\tilde{U}=U\otimes U.
 $$
\end{thm}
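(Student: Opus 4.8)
The plan is to obtain Theorem \ref{thm:3.7} as an immediate specialization of Theorem \ref{thm:3.6}, exactly in the spirit of the one-line proof of Theorem \ref{thm:3.6} itself. The guiding idea is that the tensor square of a single OQA should be recovered from the two-OQA construction by letting the second OQA be a copy of the first and taking the ``connecting'' invertible element $r$ to be $p$ itself. So I would first set $H' = H$, $p' = p$, $D' = D$, $U' = U$, and $r = p$ throughout the hypotheses of Theorem \ref{thm:3.6}.

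Next I would verify the hypotheses of Theorem \ref{thm:3.6} under this substitution. Since $(H, p, D, U)$ is an OQA by assumption, the second OQA $(H', p', D', U') = (H, p, D, U)$ is trivially one as well. The only remaining requirement is that $(H, H, p, p, p, D, U, D, U)$ be an OQA nonuple, and this is precisely the content of Remark \ref{rmk:3.2}(1). Hence all the assumptions of Theorem \ref{thm:3.6} hold, and that theorem delivers an OQA structure on $H \otimes H$ with operators $\tilde{D} = D \otimes D$ and $\tilde{U} = U \otimes U$, matching the statement.

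It then remains only to identify the element $\tilde{\alpha}$ produced by Theorem \ref{thm:3.6} with the one asserted here. Substituting $r = p$, so that $r_k \otimes r^k = p_k \otimes p^k$ and $R = r^{-1} = P = P_n \otimes P^n$, together with $p' = p$, into the formula $\tilde{\alpha} = r_k p_i \otimes p'_j R^n \otimes p^i R_n \otimes r^k p'^j$ yields $\tilde{\alpha} = p_k p_i \otimes p_j P^n \otimes p^i P_n \otimes p^k p^j$. Relabelling the formal summation indices by $(k, i, j, n) \mapsto (i, j, k, l)$ converts this into $p_i p_j \otimes p_k P^l \otimes p^j P_l \otimes p^i p^k$, which is exactly the claimed $\tilde{\alpha}$.

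The step I expect to need the most care is this final index bookkeeping: since $\tilde{\alpha}$ is a compressed sum of tensors in single-index notation, one must track precisely which lower leg $p_{\bullet}$ pairs with which upper leg $p^{\bullet}$ across the four tensor factors, so as to confirm the two expressions genuinely coincide rather than merely resembling one another. Everything else is a direct transcription from Theorem \ref{thm:3.6} and Remark \ref{rmk:3.2}(1).
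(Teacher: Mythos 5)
Your proposal is correct and is exactly the paper's proof: the paper's entire argument is ``Let $p'=r=p$ in Theorem \ref{thm:3.6},'' which you carry out with the hypotheses justified by Remark \ref{rmk:3.2}(1) and the index relabelling $(k,i,j,n)\mapsto(i,j,k,l)$ checked explicitly. Your version merely spells out details the paper leaves implicit.
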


\begin{proof} Let $p'=r=p$ in Theorem \ref{thm:3.6}.      \end{proof}

\begin{rmk}\label{rmk:3.8}
 An OQA structure on the tensor product of an OQA with itself is derived in Theorem \ref{thm:3.7}, which is different from the one given in Theorem \ref{thm:2.3} (see \cite[Theorem 4.1]{Ra2}).
\end{rmk}

\subsection{Relation with Chen's result}

\begin{cor}\label{cor:3.9}
 Assume that $(H, p)$ and $(H', p')$ are quasitriangular Hopf algebras, where $p=p_{i}\otimes p^{i}\in H\otimes H,~p'=p'_{j}\otimes p'^{j}\in H'\otimes H'$.  Let $r=r_{k}\otimes r^{k}\in H\otimes H'$ be a weak $\mathfrak{R}$-matrix of $(H, H')$. Set $[p, p']=r_{k}p_{i}\otimes p'_{j}R^{n}\otimes p^{i}R_{n}\otimes r^{k}p'^{j}$, where $R=r^{-1}$, then $(H\otimes H', [p, p'], id_{H\otimes H'}, S_H^{-2}\otimes S_{H'}^{-2})$ is an OQA.
\end{cor}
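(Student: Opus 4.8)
The plan is to deduce Corollary \ref{cor:3.9} from Chen's construction rather than to re-check the OQA axioms directly: the element $[p,p']$ is precisely the universal $\mathfrak{R}$-matrix that Theorem \ref{thm:2.9} attaches to the bicrossed coproduct, so it will suffice to recognize the resulting object as an OQA. Concretely, since $(H,p)$ and $(H',p')$ are quasitriangular and $r=r_k\otimes r^k$ is a weak $\mathfrak{R}$-matrix of $(H,H')$, Theorem \ref{thm:2.9} gives that $(H\bowtie^r H',[p,p'])$ is a quasitriangular Hopf algebra. By Remark \ref{rmk:2.5}, any quasitriangular Hopf algebra $(B,\rho)$ produces the OQA $(B,\rho,id_B,S_B^{-2})$ with $S_B$ its (automatically bijective) antipode. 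Taking $B=H\bowtie^r H'$ and $\rho=[p,p']$, this shows that $(H\bowtie^r H',[p,p'],id_{H\bowtie^r H'},\bar S^{-2})$ is an OQA, where $\bar S$ is the antipode described in Theorem \ref{thm:2.8}.

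It then remains to identify the two automorphisms with those in the statement. As algebras $H\bowtie^r H'=H\otimes H'$, so $id_{H\bowtie^r H'}=id_{H\otimes H'}$; the real work is to show $\bar S^{-2}=S_H^{-2}\otimes S_{H'}^{-2}$, i.e.\ $\bar S^2=S_H^2\otimes S_{H'}^2$. Set $\theta=S_H\otimes S_{H'}$, which is an algebra anti-automorphism of $H\otimes H'$, so that $\bar S(x)=r^{-1}\theta(x)r$ by Theorem \ref{thm:2.8}. Eq.(\ref{eq:2.16}) gives $\theta(r)=r$, whence also $\theta(r^{-1})=\theta(r)^{-1}=r^{-1}$, and therefore
\[
\bar S^2(x)=r^{-1}\theta\!\left(r^{-1}\theta(x)r\right)r=r^{-1}\theta(r)\,\theta^2(x)\,\theta(r^{-1})r=r^{-1}r\,\theta^2(x)\,r^{-1}r=\theta^2(x).
\]
Since $\theta^2=S_H^2\otimes S_{H'}^2$, we get $\bar S^2=S_H^2\otimes S_{H'}^2$ and hence $\bar S^{-2}=S_H^{-2}\otimes S_{H'}^{-2}$. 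Thus the OQA obtained above is exactly $(H\otimes H',[p,p'],id_{H\otimes H'},S_H^{-2}\otimes S_{H'}^{-2})$, as claimed.

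The only genuine obstacle is this antipode identity $\bar S^2=S_H^2\otimes S_{H'}^2$; everything else is a direct citation of Theorems \ref{thm:2.9} and \ref{thm:2.8} and Remark \ref{rmk:2.5}, and the key cancellations in the displayed computation all come from $\theta$ fixing $r$ and $r^{-1}$. As an alternative proof staying entirely inside the present framework, one checks that $(H,H',p,p',r,id_H,S_H^{-2},id_{H'},S_{H'}^{-2})$ is an OQA nonuple in the sense of Definition \ref{de:3.1}: Eqs.(\ref{eq:3.3})--(\ref{eq:3.4}) hold by Proposition \ref{pro:3.3}, Eq.(\ref{eq:3.2}) is immediate from Eq.(\ref{eq:2.16}) applied twice, and Eq.(\ref{eq:3.1}) follows from the weak $\mathfrak{R}$-matrix relations of Remark \ref{rmk:2.7} together with the antipode axioms; Theorem \ref{thm:3.6} then delivers the same conclusion with $\tilde\alpha=[p,p']$.
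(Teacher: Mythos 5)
Your proposal is correct, but your primary argument takes a genuinely different route from the paper's own proof of Corollary \ref{cor:3.9}. The paper stays inside its new framework: it checks that $(H,H',p,p',r,id_H,S_H^{-2},id_{H'},S_{H'}^{-2})$ is an OQA nonuple --- Eq.(\ref{eq:3.1}) by an explicit computation with the weak $\mathfrak{R}$-matrix identities (\ref{eq:2.15})--(\ref{eq:2.16}), Eq.(\ref{eq:3.2}) from Eq.(\ref{eq:2.16}), and Eqs.(\ref{eq:3.3})--(\ref{eq:3.4}) from Proposition \ref{pro:3.3} --- and then invokes its Theorem \ref{thm:3.6}; this is exactly the alternative you sketch in your closing sentences (there, the verification of Eq.(\ref{eq:3.1}) is the only step with real content, and your sketch of it matches the paper's computation). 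Your primary route instead goes through Chen: Theorem \ref{thm:2.9} makes $(H\bowtie^r H',[p,p'])$ quasitriangular, Remark \ref{rmk:2.5} turns it into an OQA with automorphisms $id$ and $\bar S^{-2}$, the algebra identification $H\bowtie^r H'=H\otimes H'$ of Theorem \ref{thm:2.8} transports the structure, and you identify $\bar S^{-2}=S_H^{-2}\otimes S_{H'}^{-2}$ via the conjugation computation $\bar S^2=\theta^2$ with $\theta=S_H\otimes S_{H'}$, using $\theta(r)=r$ and hence $\theta(r^{-1})=r^{-1}$. This identification is precisely the content of the paper's later Theorem \ref{thm:3.10}, and your version of it is cleaner than the paper's (which detours through two formulas for $\bar S^{-1}$). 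Your route is valid and not circular, since it uses only Theorems \ref{thm:2.8}, \ref{thm:2.9} and Remark \ref{rmk:2.5}, none of which depend on the corollary; what it costs is the paper's rhetorical point: Corollary \ref{cor:3.9} is meant to exhibit Chen's construction as a special case of the paper's new Theorem \ref{thm:3.6} (cf.\ Remark \ref{rmk:3.11}), and Theorem \ref{thm:3.10} has content exactly because the corollary was established independently of Chen. So the paper's ordering buys the ``generalization'' claim, while your route buys brevity and makes Theorem \ref{thm:3.10} an immediate by-product.
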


\begin{proof}
 Since $(H, p)$ and $(H', p')$ are quasitriangular Hopf algebras, $(H, p, id_H, S_H^{-2})$ and $(H', p',$ $id_{H'}, S_{H'}^{-2})$ are OQAs.

 While
 \begin{eqnarray*}
 D(R_{n})r_{k}\otimes U'(r^{k})R^{n}
 &=&R_{n}\underline{r_{k}}\otimes S_{H'}^{-2}(\underline{r^{k}})R^{n}\\
 &\stackrel{(\ref{eq:2.16})}{=}&\underline{R_{n}}S_{H}^{2}(r_{k})\otimes r^{k}\underline{R^{n}}\\
 &\stackrel{(\ref{eq:2.15})}{=}&\underline{S_{H}(r_{\bar{k}})S_{H}^{2}(r_{k})}\otimes r^{k}r^{\bar{k}}\\
 &=&S_{H}(\underline{S_{H}(r_{k})}r_{\bar{k}})\otimes \underline{r^{k}}r^{\bar{k}}\\
 &\stackrel{(\ref{eq:2.15})}{=}&S_{H}(\underline{R_{n}r_{\bar{k}}})\otimes \underline{R^{n}r^{\bar{k}}}\\
 &=&S_{H}(1_H)\otimes 1_{H'}=1_H\otimes 1_{H'}.
 \end{eqnarray*}

 Similarly, we have $r_{k}R_{n}\otimes R^{n}S_{H'}^{-2}(r^{k})=1_H\otimes 1_{H'}$.  So,  Eq.(\ref{eq:3.1}) holds for $r$. And Eq.(\ref{eq:3.2}) is satisfied for $r$ by Eq.(\ref{eq:2.16}). Then, by Proposition \ref{pro:3.3}, we know $(H, H', p, p', r, id_H, S_{H}^{-2}$, $id_{H'}, S_{H'}^{-2})$ is an OQA nonuple.

 Thus $(H\otimes H', [p, p'], id_{H\otimes H'}, S_H^{-2}\otimes S_{H'}^{-2})$ is an OQA by Theorem \ref{thm:3.6}.     \end{proof}

\begin{thm}\label{thm:3.10}
 Under the assumption of Corollary \ref{cor:3.9}, $(H\bowtie^r H', [p, p'])$ is a quasitriangular Hopf algebra by Theorem \ref{thm:2.9}. Then $(H\otimes H', [p, p'], id_{H\otimes H'}, \bar{S}^{-2})$ is an OQA, and is equal to the OQA structure in Corollary \ref{cor:3.9}.
\end{thm}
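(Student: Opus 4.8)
The plan is to deduce the statement almost entirely from results already in place, so that the only genuine computation is the identification of the two automorphisms playing the role of $U$.

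First I would record the first assertion as an immediate consequence of the earlier theorems. By Theorem \ref{thm:2.9}, $(H\bowtie^r H', [p, p'])$ is a quasitriangular Hopf algebra, so by Remark \ref{rmk:2.5} its antipode $\bar{S}$ is bijective and $(H\bowtie^r H', [p, p'], id_{H\otimes H'}, \bar{S}^{-2})$ is an OQA. Since $H\bowtie^r H'$ coincides with $H\otimes H'$ as an algebra, this already yields that $(H\otimes H', [p, p'], id_{H\otimes H'}, \bar{S}^{-2})$ is an OQA.

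It then remains to prove that this OQA agrees with the one produced in Corollary \ref{cor:3.9}. Both structures are carried by the same algebra $H\otimes H'$, share the same $r$-matrix $[p, p']$, and have the same first automorphism $id_{H\otimes H'}$; hence it suffices to show that their second automorphisms coincide, that is, $\bar{S}^{-2}=S_H^{-2}\otimes S_{H'}^{-2}$, or equivalently $\bar{S}^{2}=S_H^{2}\otimes S_{H'}^{2}$. To compute $\bar{S}^2$, I would set $\sigma=S_H\otimes S_{H'}$, which is an algebra anti-automorphism of $H\otimes H'$, so that the formula of Theorem \ref{thm:2.8} reads $\bar{S}(x)=r^{-1}\sigma(x)r$ for all $x\in H\otimes H'$. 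Iterating once and using that $\sigma$ reverses products gives $\bar{S}^2(x)=r^{-1}\sigma(r)\,\sigma^2(x)\,\sigma(r^{-1})\,r$. The decisive input is Eq.(\ref{eq:2.16}), which states $\sigma(r)=r$ and therefore forces $\sigma(r^{-1})=r^{-1}$; substituting these collapses the conjugating factors and leaves $\bar{S}^2(x)=\sigma^2(x)=(S_H^2\otimes S_{H'}^2)(x)$. Taking inverses yields $\bar{S}^{-2}=S_H^{-2}\otimes S_{H'}^{-2}$, so the two OQA structures are literally the same quadruple.

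The computation is short, and I do not expect a serious obstacle. The one point requiring care is that $\sigma=S_H\otimes S_{H'}$ is an \emph{anti}-homomorphism, so that conjugation by $r$ under $\bar{S}$ composes as $r^{-1}\sigma(r)(\cdots)\sigma(r^{-1})r$ and the factors must not be cancelled prematurely; once the order is tracked correctly, the $\sigma$-invariance of $r$ from Eq.(\ref{eq:2.16}) does all the remaining work.
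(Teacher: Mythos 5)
Your proof is correct and follows essentially the same route as the paper: both arguments reduce everything to the identity $\bar{S}^{-2}=S_H^{-2}\otimes S_{H'}^{-2}$ and obtain it from the anti-multiplicativity of $S_H\otimes S_{H'}$ together with the invariance $(S_H\otimes S_{H'})(r)=r$ of Eq.(\ref{eq:2.16}), which makes the conjugating factors cancel. The only difference is bookkeeping: you compute $\bar{S}^{2}=S_H^{2}\otimes S_{H'}^{2}$ and then invert, whereas the paper first establishes two auxiliary expressions for $\bar{S}^{-1}$ (its Eqs.(\ref{eq:3.13}) and (\ref{eq:3.14})) and composes them; your version is, if anything, slightly shorter.
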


\begin{proof}
 Since $\bar{S}(h\otimes h')=R(S_{H}(h)\otimes S_{H'}(h'))r=(S_H\otimes S_{H'})(r(h\otimes h')R)$,
 \begin{equation}\label{eq:3.13}
 \bar{S}^{-1}(h\otimes h')=R(S_{H}^{-1}(h)\otimes S_{H'}^{-1}(h'))r
 \end{equation}
 and
 \begin{equation}\label{eq:3.14}
 \bar{S}^{-1}(h\otimes h')=(S_{H}^{-1}\otimes S_{H'}^{-1})(r(h\otimes h')R).
 \end{equation}
 In fact,  we have
 \begin{eqnarray*}
 \bar{S}^{-1}(\bar{S}(h\otimes h'))
 &=&\bar{S}^{-1}(R(S_{H}(h)\otimes S_{H'}(h'))r)\\
 &=&(S_{H}^{-1}\otimes S_{H'}^{-1})(\bar{r}(R(S_{H}(h)\otimes S_{H'}(h'))r)\bar{R})\\
 &=&(S_{H}^{-1}\otimes S_{H'}^{-1})(S_{H}(h)\otimes S_{H'}(h'))\\
 &=&h\otimes h',
 \end{eqnarray*}
 \begin{eqnarray*}
 \bar{S}(\bar{S}^{-1}(h\otimes h'))
 &=&\bar{S}(R(S_{H}^{-1}(h)\otimes S_{H'}^{-1}(h'))r)\\
 &=&({S_{H}}\otimes {S_{H'}})(\bar{r}(R(S_{H}^{-1}(h)\otimes S_{H'}^{-1}(h'))r)\bar{R})\\
 &=&({S_{H}}\otimes {S_{H'}})(S_{H}^{-1}(h)\otimes S_{H'}^{-1}(h'))\\
 &=&h\otimes h'.
 \end{eqnarray*}
 Then
 \begin{eqnarray*}
 \bar{S}^{-2}(h\otimes h')
 &\stackrel{(\ref{eq:3.13})}{=}&\bar{S}^{-1}(R(S_{H}^{-1}(h)\otimes S_{H'}^{-1}(h'))r)\\
 &\stackrel{(\ref{eq:3.14})}{=}&(S_{H}^{-1}\otimes S_{H'}^{-1})(\bar{r}(R(S_{H}^{-1}(h)\otimes S_{H'}^{-1}(h'))r)\bar{R})\\
 &\stackrel{}{=}&(S_{H}^{-1}\otimes S_{H'}^{-1})(S_{H}^{-1}(h)\otimes S_{H'}^{-1}(h'))\\
 &\stackrel{}{=}&(S_{H}^{-2}\otimes S_{H'}^{-2})(h\otimes h')
 \end{eqnarray*}

 Therefore, $\bar{S}^{-2}=S_{H}^{-2}\otimes S_{H'}^{-2}$, and we finish the proof. \end{proof}

\begin{rmk}\label{rmk:3.11}
 By Theorem \ref{thm:3.10}, our results generalize the OQA construction implicit in Theorem \ref{thm:2.9} (see \cite[Theorem 2.2]{Ch}).
\end{rmk}

\section{Applciations}
 In this section, several examples are given. Especially, by Example \ref{ex:4.3}, we can get our results here is different from Radford's.

\begin{ex}\label{ex:4.1}
 With notation as Case I in Example \ref{ex:3.4}. By Theorem \ref{thm:3.6} and a tedious computation, we can get the OQA structure $\tilde{\alpha}$ on the tensor product $H\otimes H'$ of  $(H=M_2(K), p, t, t)$ and $(H'=M_3(K), p', t', t')$. Here we use $\textbf{0}_{9\times 9}$ to denote the $9\times 9$ zero matrix and $x=a-a^{-1}$. Then
 $$
 \tilde{\alpha}=\left(%
 \begin{array}{cccc}
  A_{11} & \textbf{0}_{9\times 9} & \textbf{0}_{9\times 9} & A_{14} \\
  \textbf{0}_{9\times 9} & \textbf{0}_{9\times 9} & A_{23} & \textbf{0}_{9\times 9} \\
  \textbf{0}_{9\times 9} & \textbf{0}_{9\times 9} & \textbf{0}_{9\times 9} & \textbf{0}_{9\times 9} \\
  A_{41} & \textbf{0}_{9\times 9} & \textbf{0}_{9\times 9} & A_{44} \\
 \end{array}%
 \right)_{36\times 36},
 $$
 where
 \begin{itemize}
  \item  in $A_{11}=(b_{ij})_{9\times 9}$, $b_{11}=b_{55}=b_{99}=a^2, b_{15}=1-a, b_{19}=b_{51}=b_{91}=-a, b_{24}=b_{37}=b_{68}=xa, b_{59}=a, b_{95}=a+1$ and other entries are 0;
  \item  in $A_{14}=(b_{ij})_{9\times 9}$, $b_{11}=b_{55}=-a, b_{19}=b_{51}=1, b_{24}=-x, b_{37}=b_{68}=x, b_{59}=b_{91}=-1, b_{99}=a$ and other entries are 0;
  \item  in $A_{23}=(b_{ij})_{9\times 9}$, $b_{11}=b_{55}=b_{99}=xa, b_{15}=b_{19}=b_{51}=b_{91}=-x, b_{24}=b_{37}=b_{68}=x^2, b_{59}=b_{95}=x$ and other entries are 0;
  \item  in $A_{41}=(b_{ij})_{9\times 9}$, $b_{11}=b_{45}=-a, b_{15}=b_{21}=b_{49}=b_{91}=1, b_{19}=b_{95}=-1, b_{24}=-x, b_{37}=b_{58}=x, b_{99}=a$ and other entries are 0;
  \item  in $A_{44}=(b_{ij})_{9\times 9}$, $b_{11}=b_{55}=b_{99}=a^2, b_{15}=b_{51}=b_{59}=b_{95}=-a, b_{19}=b_{91}=a, b_{24}=b_{37}=b_{68}=xa$ and other entries are 0.
 \end{itemize}

\end{ex}

\begin{ex}\label{ex:4.3}
 Let $(H=M_2(K), p, t, t)$ given in Example \ref{ex:3.4}. By Theorem \ref{thm:3.7} and a tedious computation, we can get the OQA structure $\tilde{\alpha}$ on the tensor product $H\otimes H$ of  $(H=M_2(K), p, t, t)$ with itself, where $\tilde{\alpha}$ is given as follows:

  $$
  \left(
    \begin{array}{cccccccccccccccc}
      a^2 & 0 & 0 & 1 & 0 & 0 & -x^2a & 0 & 0 & 0 & 0 & 0 & a^2 & 0 & 0 & 1 \\
       0 & 0 & xa &  0 & 0  &  0 & 0  &  0 & 0  &  0 & 0  &  0 & 0  &  0 & xa^{-1}  & 0 \\
      0 & 0  &  0 & 0  & -xa^2 &  0 & 0 & 0 & 0  &  0 & 0  &  0 & 0  &  0 & 0  &  0 \\
      a^2 & 0  &  0 & a^2  &  0  &  0 & 0  &  0 & 0  &  0 & 0  &  1  & 0 & 0 & 1 \\
      0 & 0 & x & 0 & 0 & 0 & 0  &  0 & xa & 0 & 0 & xa^{-1} & 0 & 0 & x(a^2-x^2) & 0 \\
      0 & 0 & 0 & 0  &  0 & 0 & 0 & 0 & 0  &  0 & x^2 & 0 & 0 & 0 & 0  &  0 \\
      0 & 0 & 0 & 0  &  0 & 0 & -x^2 & 0 & 0 & 0 & 0  &  0 & -x^2 & 0  &  0 & -x^2a \\
      0 & 0 & x & 0 & 0 & 0 & 0  &  0 & x & 0  &  0 & xa & 0  &  0 & x & 0 \\
      0 & 0 & 0 & 0 & 0 & 0 & 0 & 0 & 0 & 0 & 0 & 0 & 0 & 0 & 0 & 0 \\
      0 & 0 & 0 & 0 & 0 & 0 & 0 & 0 & 0 & 0 & 0 & 0 & 0 & 0 & 0 & 0 \\
      0 & 0 & 0 & 0 & 0 & 0 & 0 & 0 & 0 & 0 & 0 & 0 & 0 & 0 & 0 & 0 \\
      0 & 0 & 0 & 0 & 0 & 0 & 0 & 0 & 0 & 0 & 0 & 0 & 0 & 0 & 0 & 0 \\
      1 & 0 & 0 & 1 & 0 & 0 & -x^2a & 0 & 0 & 0 & 0 & 0 & a^2 & 0 & 0 & a^2 \\
      0 & 0 & xa & 0 & 0 & 0 & 0 & 0 & 0 & 0 & 0 & 0 & 0 & 0 & xa & 0 \\
      0 & 0 & 0 & 0 & -x & 0 & 0 & -xa^2 & 0 & 0 & 0 & 0 & 0 & 0 & 0 & 0 \\
      1 & 0 & 0 & a^2 & 0 & 0 & 0 & 0 & 0 & 0 & 0 & 0 & 1 & 0 & 0 & a^2
    \end{array}
  \right).
  $$
\end{ex}

\begin{rmk}\label{rmk:4.4}
 Example \ref{ex:4.3} implies that the OQA structure on the tensor product of an OQA with itself in Theorem \ref{thm:3.7} is different from the results in \cite{MW,Ra2}, since the $16\times 16$ matrix here is different from those.
\end{rmk}

\begin{ex}\label{ex:4.5}
 Let $H=K\langle g, x \rangle$ be an algebra with multiplication $g^2=1, x^2=0, gx=-xg$. Then $D=id_H: H\rightarrow H$ and $U:H'\rightarrow H'$ such that $U(1)=1, U(g)=g, U(x)=-x, U(gx)=xg$ are commuting algebra automorphisms of $H$ and $(H, p,
 D, U)$ is an OQA with
 $$
 p=p^1\otimes p^2=\frac{1}{2}(1\otimes 1+1\otimes g+g\otimes 1-g\otimes g)+\frac{\nu}{2}(x\otimes x+x\otimes gx+gx\otimes gx-gx\otimes x)
 $$
 for $\forall~\nu\in K$.

 Let $H'=K\langle t \rangle$ such that $t^2=1$, then $D'=U'=id_{H'}$ are commuting algebra automorphisms of $H'$ and $(H', p', D', U')$ is
 an OQA with
 $$
 p'=p'^1\otimes p'^2=\frac{1}{2}(1\otimes 1+1\otimes t+t\otimes 1-t\otimes t).
 $$

 Assume that $r=r^1\otimes r^2=\frac{1}{2}(1\otimes 1+1\otimes t+g\otimes 1-g\otimes t) \in H\otimes H'$, and a direct verification shows that $(H,H',p,p',r,D,U,D',U')$ is an
 OQA nonuple.

 By a straightforward computation and Theorem \ref{thm:3.7}, we get
 \begin{eqnarray*}
 &\tilde{\alpha}=\tilde{\alpha}^1\otimes \tilde{\alpha}^2=&\frac{1}{2}(1\otimes 1\otimes 1\otimes 1+1\otimes 1\otimes g\otimes t+g\otimes t\otimes 1\otimes 1-g\otimes t\otimes g\otimes t)\\
 &&+\frac{\nu}{2}(x\otimes t\otimes gx\otimes 1+x\otimes t\otimes x\otimes t+gx\otimes 1\otimes gx\otimes 1-gx\otimes 1\otimes x\otimes t ),
 \end{eqnarray*}
 for $\forall~\nu \in K$.
\end{ex}

 We end this paper with the following question.
\begin{ques}
 In \cite{Ra2}, D. E. Radford obtained the OQA structures on the tensor product of an OQA with itself through an algebra isomorphism of the quantum double $D(A)$ with $A\otimes A$ under certain condition. And there they also gave the application to a non-trivial ambient isotopy invariant of oriented links, which depend to a large extent on special quantum double $D(A)$.

 How can we apply our results in Theorem \ref{thm:3.7} (or more general Theorem \ref{thm:3.6}) to construct isotopy invariants without the help of the quantum double $D(A)$ ?
\end{ques}

 {\bf Acknowledgments:}
 This work was partially supported by China Postdoctoral Science Foundation (No. 2017M611291) and National Natural Science Foundation of China (Nos.11801150, 11601231).

\smallskip

 \end{document}